\title{\Large A \MakeLowercase{new look  at}  F\MakeLowercase{insler surfaces and the} L\MakeLowercase{andsberg's} PDE  }
\author[Elgendi]{\bf S. G. ~E\MakeLowercase{lgendi}}
\address{Salah G. Elgendi, Department of Mathematics, Faculty of Science, Benha
	University, Egypt} \email{salah.ali@fsci.bu.edu.eg, \, salahelgendi@yahoo.com}
\urladdr{http://www.bu.edu.eg/staff/salahali7}
\keywords{Berwlad surfaces; Landsberg surfaces; Landsberg's PDE; conformal transformation.}
\subjclass[2020]{53C60, 53B40, 58B20.}
\thanks{}
\def\blue#1{\textcolor[rgb]{0.0,0.0,1.0}{#1}}
\newcommand{\T}{{\mathcal T}}
\newcommand{\C}{{\mathcal C}}
\newcommand{\Real}{\mathbb R}
\newcommand{\abs}[1]{\left\vert#1\right\vert}
\newcommand{\To}{\longrightarrow}
\newcommand{\tm}{\T M}
\def\H{{{\mathcal D}_{\mathcal H}}}
\def\pa{\partial}
\def\paa{\dot{\partial}}
\def\+{\!+\!}
\def\={\!=\!}
\def\<{\!<\!}
\def\>{\!>\!}
\let\oldmarginpar\marginpar
\renewcommand\marginpar[1]{\oldmarginpar[\raggedleft\footnotesize #1]%
  {\blue{\raggedright \footnotesize \fbox{
      \begin{minipage}{1.0\linewidth}
        #1
      \end{minipage}
}}}}
\numberwithin{equation}{section} 
\numberwithin{figure}{section} 
\theoremstyle{plain}
\newtheorem*{theorem*}{Theorem}
\newtheorem{theorem}{Theorem}[section]
\newtheorem{lemma}[theorem]{Lemma}
\newtheorem{proposition}[theorem]{Proposition}
\newtheorem{corollary}[theorem]{Corollary}
\theoremstyle{definition}
\newtheorem{definition}[theorem]{Definition}
\theoremstyle{remark}
\newtheorem{example}{Example}
\newtheorem{remark}[theorem]{Remark}
\newtheorem*{acknowledgement*}{Acknowledgement}
\begin{document}

\maketitle

\bigskip


\bigskip

\begin{abstract}
 In this paper, we introduce a new look at Finsler surfaces. Landsberg surfaces are Finsler surfaces that are solutions of a system of non-linear partial differential equations.  Considering the unicorn's Landsberg problem,  we reduce this system to a single non-linear PDE which we call the Landsberg's PDE. By making use of the new look of Finsler surfaces, we solve the Landsberg's PDE and get a class of solutions. Moreover, we show  that these solutions and their  conformal transformations   are Berwaldain.
 \end{abstract}

\section{Introduction}

A Finsler manifold $(M,F)$ is said be  \textit{Berwald } if the coefficients of  Berwald connection depend only on the position arguments and this is equivalent to that the Berwald parallel translation $P_c$ along a curve $c(t)$ is linear isometry  between $(T_pM,F_p)$  and $(T_qM,F_q)$ where $c(t)$ joins the points $p,q\in M$.
 A Finsler manifold $(M,F)$  is said be \textit{Landsberg } if the horizontal covariant derivative of the metric tensor of $F$ with respect to the Berwald connection vanishes and this is equivalent to the fact that the   parallel translation $P_c$ along $c$ preserves the induced Riemannian metrics on the slit tangent spaces, i.e., $P_c : (T_pM \backslash \{0\},{g}_p) \longrightarrow (T_qM \backslash \{0\}, {g}_q)$ is an isometry. There are many other characterizations for  Berwald and Landsberg metrics.

 It is known that every Berwald space is  Landsberg, but the converse  is a long-existing problem
in Finsler geometry, which is still open. Matsumoto,  one of the best geometers who had a very   significant contribution  to Finsler geometry in the last century,  called the problem  the most important unsolved problem in Finsler geometry. Many geometers in the Finslerian area of research  looking for a regular Landsberg space which is not Berwald. Even they want to know if such spaces exists or not.   From the applicable point of view, especially in Physics,  G. Asanov \cite{Asanov} obtained class of metrics (singular or y-local) arising from Finslerian General Relativity. These metrics are    non-Berwaldian Landsberg metrics. Later, Z. Shen \cite{Shen_example}, generalized this class and classified all Landsberg  $(\alpha,\beta)$-metrics which are not Berwaldian.
Due to the many unsuccessful attempts made for finding non-Berwaldian Landsberg metrics, D. Bao \cite{Bao} called them \lq\lq unicorns”.

\medskip

 There are some papers devoted to the unicorn problem in dimension two. For example, R. Bryant claimed there  exists the singular Landsberg Finsler surfaces which are not Berwaldian, moreover among them there is  surfaces  with vanishing flag curvature (cf. \cite{Bao}). Later, Zhou \cite{Zhou} confirmed R. Bryant's claim by giving examples of Landsberg surfaces which are not Berwaldain.  Recently in \cite{Zhou-note} it was shown that the examples obtained by Zhou are in fact Berwaldian. By the way, the spherically symmetric Landsberg surfaces are studied in \cite{Elgendi-SSM}. In \cite{Thompson}, G. Thompson claimed that there are Landsberg Finsler metrics in dimension two which are not Berwaldian, but no  concrete  examples of such surfaces are given. Also, one can see the work of S. V. Sabau  in dimension two, for example, see \cite{Sorin}. For concrete examples and further studies of the unicorns for higher dimensions we refer to \cite{Elgendi-LBp,Elgendi-solutions,Elgendi-ST_condition}.

\medskip

In this paper, we rewrite the Finsler function on any two-dimensional manifold as follows:
\begin{equation*}
  F=\abs{y^1}f(x,\varepsilon u),      \quad u=\frac{y^2}{y^1}, \  \ \varepsilon:=\operatorname{sgn}(y^1)
\end{equation*}
where $f(x,\varepsilon u):=F(x,\varepsilon ,\varepsilon u)$ is a positive smooth function on $M\times\Real$   and $|\cdot |$ is the absolute value.
It should be noted that if we start by regular Finsler function $F$, then the Finsler function $F(x,y)=\abs{y^1} f(x,\varepsilon u)$ is regular although the function $u$ has a singularity at $y^1=0$.
As an  example  (cf. \cite[Example 1.2.2 Page 15]{shen-book1}):
$$F(x,y)=\sqrt{(y^1)^2+(y^2)^2}+B y^1=|y^1|\left(\sqrt{1+u^2}+\varepsilon B\right).$$
In this example $f(x, \varepsilon u)=\sqrt{1+u^2}+\varepsilon B$.

\medskip

We calculate the Berwald and Landsberg curvatures. We show that the Landsberg condition leads to a single  PDE. By solving this PDE we show that all  two-dimensional   Landsberg metrics on the form
\begin{equation*}
	F=	\abs{y^1}\phi(t), \quad t:=\rho(x^1,x^2) u, \quad u :=y^2/y^1
\end{equation*}
 and their conformal transformations are Berwaldian (cf. Theorems \ref{Theorem_A} and \ref{Theorem_B}).
As by-product, we get  explicit formulae for these solutions, precisely, the classes of the Landsberg solutions given by
\begin{equation*}
	F=	\sqrt{c_3\rho^2(y^2)^2-(c-2)\rho y^1y^2-2c_1 (y^1)^2}  \ e^{ \frac{c}{\sqrt{c^2-4c_2}}\ \operatorname{arctanh}\left( \frac{2c_3 \rho y^2-(c-2)y^1}{y^1\sqrt{c^2-4c_2}}\right) },
\end{equation*}
are Berwaldian where  $c:=2c_1c_3+c_2+1$ and $c_2>0, a,b,c_1,c_3$ are constants (cf. Theorem \ref{Theorem_C}).

In case of higher dimensions $n\geq 3$, the conformal transformation of a Minkowski metrics can yield a non-Berwaldian Landsberg metrics (cf. \cite{Elgendi-LBp}). Assuming that a manifold $(M,F)$ is Minkowski if the Finsler function $F$ depends on the directional argument $y$ only, in contrast of the higher dimensions, we prove that if the conformal transformation  of any two dimensional  Minkowski metric is Landsberg then it must be  Berwaldian (cf. Theorem \ref{Theorem_D}).

\section{Preliminaries}

Let $M$ be an $n$-dimensional manifold and $(TM,\pi_M,M)$ be its tangent bundle
and $(\T M,\pi,M)$ the subbundle of nonzero tangent vectors.  We denote by
$(x^i) $ local coordinates on the base manifold $M$ and by $(x^i, y^i)$ the
induced coordinates on $TM$.  The vector $1$-form $J$ on $TM$ defined,
locally, by $J = \frac{\partial}{\partial y^i} \otimes dx^i$ is called the
natural almost-tangent structure of $T M$. The vertical vector field
$\C=y^i\frac{\partial}{\partial y^i}$ on $TM$ is called the canonical or the
Liouville vector field.

A vector field $S\in \mathfrak{X}(\T M)$ is called a spray if $JS = \C$ and
$[\C, S] = S$. Locally, a spray can be expressed as follows
\begin{equation*}
  \label{eq:spray}
  S = y^i \frac{\partial}{\partial x^i} - 2G^i\frac{\partial}{\partial y^i},
\end{equation*}
where the \emph{spray coefficients} $G^i=G^i(x,y)$ are $2$-homogeneous
functions in the $y=(y^1, \dots , y^n)$ variable.

A nonlinear connection is defined by an $n$-dimensional distribution $H : u \in \tm \rightarrow H_u\subset T_u(\tm)$ that is supplementary to the vertical distribution, which means that for all $u \in \tm$, we have
\begin{equation}
\label{eq:direct_sum}
T_u(\tm) = H_u(\tm) \oplus V_u(\tm).
\end{equation}

Every spray S induces a canonical nonlinear connection through the corresponding horizontal and vertical projectors,
\begin{equation*}
  \label{projectors}
    h=\frac{1}{2}  (Id + [J,S]), \,\,\,\,            v=\frac{1}{2}(Id - [J,S])
\end{equation*}
Equivalently, the canonical nonlinear connection induced by a spray can be expressed in terms of an almost product structure  $\Gamma = [J,S] = h - v$. With respect to the induced nonlinear connection, a spray $S$ is horizontal, which means that $S = hS$. Locally, the two projectors $h$ and $v$ can be expressed as follows
$$h=\frac{\delta}{\delta x^i}\otimes dx^i, \quad\quad v=\frac{\partial}{\partial y^i}\otimes \delta y^i,$$
$$\frac{\delta}{\delta x^i}=\frac{\partial}{\partial x^i}-G^j_i(x,y)\frac{\partial}{\partial y^j},\quad \delta y^i=dy^i+G^j_i(x,y)dx^i, \quad G^j_i(x,y)=\frac{\partial G^j}{\partial y^i}.$$
Moreover, the coefficients of the Berwald connection is given by
$$ G^h_{ij}=\frac{\partial G^h_j}{\partial y^i}.$$

\begin{displaymath}
  R=\frac{1}{2}[h,h]=\frac{1}{2}R^i_{jk}\frac{\partial}{\partial
    y^i}\otimes dx^j \wedge dx^k, \qquad R^i_{jk} =
  \frac{\delta
    G^i_j}{\delta x^k} - \frac{\delta G^i_k}{\delta x^j}
\end{displaymath}
is called the curvature of $S$. From the curvature tensor one can obtain
the Riemann curvature \cite{shen-book1} (or the Jacobi endomorphism, see
\cite{Bucataru1}), which is defined by
\begin{equation}
  \label{eq:16}
  \Phi= R^i_j \,  dx^j \otimes \frac{\partial}{\partial y^i},
  \qquad   R^i_j =   2\frac{\partial G^i}{\partial x^j} - S(G^i_j) -
  G^i_k G^k_j.
\end{equation}
The two curvature tensors are related by
\begin{equation}
  \label{eq:R_Phi}
  \Phi=i_SR, \qquad 3R=[J,\Phi],
\end{equation}
respectively.

From now on,  for simplicity,  we use the notations
$$\pa_i:=\frac{\partial}{\partial x^i}, \quad \dot{\partial}_i:=\frac{\partial}{\partial y^i}.$$

\begin{definition}
A Finsler manifold  of dimension $n$ is a pair $(M,F)$, where $M$ is a  differentiable manifold of dimension $n$ and $F$ is a map  $$F: TM \To \Real ,\vspace{-0.1cm}$$  such that{\em:}
 \begin{description}
    \item[(a)] $F$ is smooth and strictly positive on $\T M$ and $F(x,y)=0$ if and only if $y=0$,
    \item[(b)]$F$ is positively homogenous of degree $1$ in the directional argument $y${\em:}
    $\mathcal{L}_{\mathcal{C}} F=F$,
    \item[(c)] The metric tensor $g_{ij}= \dot{\partial}_i \dot{\partial}_j E$ has rank $n$ on $\T M$, where $E:=\frac{1}{2}F^2$ is the energy function.
 \end{description}
 \end{definition}

 Since the $2$-form $dd_JE$ is
non-degenerate,  the Euler-Lagrange equation
\begin{equation*}
  \label{eq:EL}
  i_Sdd_JE=-dE
\end{equation*}
uniquely determines a spray $S$ on $TM$.  This spray is called the
\emph{geodesic spray} of the Finsler function.

\begin{definition}
  A spray $S$ on a manifold $M$ is called \emph{Finsler metrizable} if there
  exists a Finsler function $F$ such that the geodesic spray of the Finsler
  manifold $(M,F)$ is $S$.
  \end{definition}

  It is known that a spray $S$ is Finsler metrizable if and only if there exists a non-degenerate  solution $F$    for the system
  \begin{equation}
  \label{metrizable_system}
  d_hF=0, \quad d_\C F=F,
  \end{equation}
  where $h$ is the horizontal projector associated to $S$.

The local formula for the hv-curvature tensor $G$ of Berwlad connection and the Landsbeg tensor $L$ given, respectively,  by
\begin{equation}
\label{Berwald_curv.}
G=G^h_{ijk} dx^i\otimes dx^j\otimes dx^k\otimes\paa_h
\end{equation}
\begin{equation}
\label{Landsberg_Tensor}
L=L_{ijk} dx^i\otimes dx^j\otimes dx^k,
\end{equation}
where $G^h_{ijk}:=\paa_k G^h_{ij}$ and $L_{ijk}:=-\frac{1}{2}F G^h_{ijk}\ell_h$.
 \begin{definition}
A Finsler manifold $(M,F)$ is said to be \textit{Berwald} if and only if $G^{h}_{ijk}$ vanishes identically.
\end{definition}

  \begin{definition}
A Finsler manifold $(M,F)$  is called \textit{Landsberg} if if and only if $L_{ijk}$ vanishes identically.
\end{definition}

\section{A new look at Finsler surfaces and the Landsberg's PDE}

In what follows,  $\pa_1$ (rep. $\pa_2$) stands for the partial differentiation with respect to $x^1$ (resp. $x^2$) and $\paa_1$ (resp. $\paa_2$) stands for the partial differentiation with respect to $y^1$ (resp. $y^2$).

\medskip

The following lemmas are useful for subsequent use.
\begin{lemma}\label{Lemma_G1_G2}
  Let $F$ be a Finsler function on a two-dimensional manifold $M$, then $F$ can be  written  in the form
\begin{equation}\label{Finsler_Function}
  F=\abs{y^1}f(x,\varepsilon u),      \quad u=\frac{y^2}{y^1}, \  \ \varepsilon:=\operatorname{sgn}(y^1)
\end{equation}
where $f(x,\varepsilon u):=F(x,\varepsilon ,\varepsilon u)$ is a positive smooth function on $M\times\Real$   and $|\cdot |$ is the absolute value.
Moreover, for the expression $ F=\abs{y^1}f(x,\varepsilon u)$,  the  coefficients $G^1$ and $G^2$ of the geodesic spray are  given by
\begin{equation}\label{G^1_G^2}
  G^1=f_1(x,u)(y^1)^2, \quad  G^2=f_2(x,u)(y^1)^2,
\end{equation}
where  the functions $f_1$ and $f_2$ are smooth functions on $M\times\Real$ and  given as follows
\begin{equation}\label{G_f1}
f_1=\frac{(\pa_1f+u\pa_2f)f''-(\pa_1f'+u\pa_2f'-\pa_2f)f'}{2ff''},
\end{equation}
\begin{equation}\label{G_f2}
f_2=\frac{u(\pa_1f+u\pa_2f)f''+(\pa_1f'+u\pa_2f'-\pa_2f)( f- uf')}{2ff''},
\end{equation}
where    $f'$ (resp. $f''$) is the first (resp. the second) derivative of  $f$  with respect to $ u$ and so on.
\end{lemma}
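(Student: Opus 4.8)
I would establish the two assertions separately. The representation \eqref{Finsler_Function} is a direct consequence of positive $1$-homogeneity (property (b) of the definition): for $y^1\neq 0$, applying $F(x,\lambda y)=\lambda F(x,y)$ with $\lambda=1/\abs{y^1}>0$ gives $F(x,y^1,y^2)=\abs{y^1}\,F(x,\varepsilon,\varepsilon u)$, since $y^1/\abs{y^1}=\varepsilon$ and $y^2/\abs{y^1}=\varepsilon u$; one then sets $f(x,\varepsilon u):=F(x,\varepsilon,\varepsilon u)$, which is positive and smooth on $M\times\Real$ because $F$ is positive and smooth on $\TM$, and the quoted example is the special case $f(x,\varepsilon u)=\sqrt{1+u^2}+\varepsilon B$. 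For the spray coefficients I would compute on the domain $\{y^1>0\}$, where $\varepsilon=1$ and $F=y^1 f(x,u)$, $u=y^2/y^1$; the domain $\{y^1<0\}$ is handled by the identical computation with $\varepsilon=-1$, and since $G^1,G^2$ are the coefficients of the (globally smooth) geodesic spray while $u$ is singular only on the hyperplane $\{y^1=0\}$, the resulting formulae are valid on all of $\TM$.

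\textbf{The metric on $\{y^1>0\}$.} Here $E=\tfrac12 F^2=\tfrac12(y^1)^2 f^2$, and from $\paa_1 u=-u/y^1$, $\paa_2 u=1/y^1$ one gets the supporting elements $\ell_i=\paa_i F$, namely $\ell_1=f-uf'$, $\ell_2=f'$, and the angular metric $m_{ij}=F\paa_i\paa_j F$, which is the rank-one tensor with $m_{11}=u^2ff''$, $m_{12}=-uff''$, $m_{22}=ff''$. Hence $g_{ij}=\paa_i\paa_j E=\ell_i\ell_j+m_{ij}$ is a sum of two rank-one symmetric tensors, so writing $m_{ij}=m_im_j$ with $m_1=-u\sqrt{ff''}$, $m_2=\sqrt{ff''}$,
\[
\det(g_{ij})=(\ell_1 m_2-\ell_2 m_1)^{2}=ff''\,(\ell_1+u\ell_2)^{2}=ff''\cdot f^{2}=f^{3}f'' ,
\]
using $\ell_1+u\ell_2=f$. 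In particular the non-degeneracy condition (c) reads $ff''\neq 0$, and $g^{11}=g_{22}/(f^{3}f'')$, $g^{12}=-g_{12}/(f^{3}f'')$, $g^{22}=g_{11}/(f^{3}f'')$.

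\textbf{The geodesic spray.} The geodesic spray is the unique spray with $i_S\,dd_J E=-dE$, which in coordinates is equivalent to $2G^i=g^{ij}A_j$ with $A_j:=y^k\pa_k\paa_j E-\pa_j E$ (where $\pa_k$ hits only the explicit $x$-dependence, so $\pa_k E=(y^1)^2 f\pa_k f$ because $u$ is $x$-free). Using $y^1\pa_1 h+y^2\pa_2 h=y^1(\pa_1 h+u\pa_2 h)$ for any $h=h(x,u)$ (a consequence of $1$-homogeneity) and setting
\[
P:=\pa_1 f+u\pa_2 f,\qquad Q:=\pa_1 f'+u\pa_2 f'-\pa_2 f,
\]
a short computation gives $A_1=(y^1)^{2}\bigl(P(f-uf')-ufQ\bigr)$ and $A_2=(y^1)^{2}\bigl(f'P+fQ\bigr)$. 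Substituting into $2G^1=g^{11}A_1+g^{12}A_2$ and $2G^2=g^{12}A_1+g^{22}A_2$, the factor $(y^1)^2$ comes out as claimed, and the numerators collapse by the elementary identities
\[
g_{22}(f-uf')-g_{12}f'=f^{2}f'',\quad ug_{22}+g_{12}=ff',\quad g_{11}f'-g_{12}(f-uf')=uf^{2}f'',\quad ug_{12}+g_{11}=f(f-uf'),
\]
each checked directly from the explicit $g_{11},g_{12},g_{22}$. This yields $G^1=f_1(x,u)(y^1)^2$ and $G^2=f_2(x,u)(y^1)^2$ with $f_1=(Pf''-Qf')/(2ff'')$ and $f_2=\bigl(uPf''+Q(f-uf')\bigr)/(2ff'')$, i.e.\ exactly \eqref{G_f1} and \eqref{G_f2}; smoothness of $f_1,f_2$ on $M\times\Real$ is immediate since $f>0$ and $f''\neq 0$.

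\textbf{Main obstacle.} There is no conceptual difficulty here, only bookkeeping, so the real work is to organize the computation so that the cancellations are transparent rather than miraculous. Two structural facts make everything collapse: (i) in dimension two the angular metric $m_{ij}$ has rank one with explicit kernel direction $(-u,1)^{\!\top}$ (the two-dimensional Cartan-scalar phenomenon), which forces $\det(g_{ij})=f^{3}f''$ and makes all four bracket identities factor through powers of $f$; and (ii) the substitution $y^1\pa_1+y^2\pa_2=y^1(\pa_1+u\pa_2)$ from $1$-homogeneity, which is precisely what puts $A_1,A_2$ into the compact form above. The only other point to keep in mind is that $u$ is singular on $\{y^1=0\}$; this is harmless because the $G^i$ are defined intrinsically, and one re-runs the last two steps verbatim on $\{y^1<0\}$ with $\varepsilon=-1$.
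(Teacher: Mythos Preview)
Your argument is correct, but it reaches \eqref{G_f1}--\eqref{G_f2} by a genuinely different route from the paper. The paper does \emph{not} invert the metric: after writing $G^i=f_i(x,u)(y^1)^2$ by homogeneity, it plugs the nonlinear connection coefficients $N^h_j=\dot\partial_jG^h$ into the metrizability condition $d_hF=0$, obtaining two scalar equations in the unknowns $f_1,f_2,f_1',f_2'$; differentiating these in $u$ and taking suitable linear combinations eliminates the first derivatives and yields a $2\times 2$ linear system in $(f_1,f_2)$, which is then solved. By contrast, you go straight through the explicit spray formula $2G^i=g^{ij}\bigl(y^k\partial_k\dot\partial_jE-\partial_jE\bigr)$: you compute $g_{ij}=\ell_i\ell_j+m_{ij}$, observe that in dimension two the angular part is rank one so $\det g=f^3f''$, package the source terms as $A_1=(y^1)^2\bigl(P(f-uf')-ufQ\bigr)$, $A_2=(y^1)^2\bigl(f'P+fQ\bigr)$, and then the four algebraic identities you list make the numerator collapse. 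What your approach buys is transparency---the non-degeneracy condition $ff''\neq 0$ and the factor $(y^1)^2$ appear immediately, and no differentiation of the equations is needed; what the paper's approach buys is that one never has to write down $g^{ij}$ at all, and the same manipulation of $d_hF=0$ would scale more cleanly if one wanted to study non-metric sprays via \eqref{metrizable_system}. One cosmetic remark: your determinant step writes $m_{ij}=m_im_j$ with $m_i$ involving $\sqrt{ff''}$, which tacitly uses $ff''>0$; this is guaranteed by positive-definiteness of $g_{ij}$, but you might simply compute $\det(\ell_i\ell_j+m_{ij})$ directly to avoid the square root.
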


Before we go to prove the above  lemma, let's give the following remark.
\begin{remark}
 It should be noted that if we start by regular Finsler function $F$, then the Finsler function $F(x,y)=\abs{y^1} f(x,\varepsilon u)$ is regular although the function $u$ has a singularity at $y^1=0$.
As an  example  (cf. \cite[Example 1.2.2 Page 15]{shen-book1}):
$$F(x,y)=\sqrt{(y^1)^2+(y^2)^2}+B y^1=|y^1|\left(\sqrt{1+u^2}+\varepsilon B\right).$$
In this example $f(x, \varepsilon u)=\sqrt{1+u^2}+\varepsilon B$.
\end{remark}
\begin{proof}[Proof of Lemma \ref{Lemma_G1_G2}]
Since $F=0 $ only on the zero section, then away from the zero section at each $x\in M$, at least one   of the $y$'s is non zero. Without loss of generality, we assume that $y^1\neq 0$. Then by using the fact that the Finsler function $F$ is positively homogeneous of degree $1$ in $y$, we have
$$F(x,y^1,y^2)=F\left(x, \abs{y^1} \frac{y^1}{\abs{y^1}}, \abs{y^1}\frac{y^2}{\abs{y^1}}\right)=\abs{y^1}F(x,\varepsilon,\varepsilon u)=\abs{y^1} f(x,\varepsilon u),$$
where $f(x,\varepsilon u):=F(x,\varepsilon,\varepsilon u)$.

 Now, assume that   $ F=\abs{y^1}f(x^1,x^2,\varepsilon u)$. Since the coefficients $G^i$ of the geodesic spray of $F$ are homogeneous of degree $2$ in $y$, then we can write
 $$ G^1=f_1(x^1,x^2,u)(y^1)^2, \quad  G^2=f_2(x^1,x^2,u)(y^1)^2.$$
  Keeping the facts that $\displaystyle{\frac{\partial f}{\partial (\varepsilon u)}}=\varepsilon \frac{\partial f}{\partial u}=\varepsilon f'$ and $\varepsilon^2=1$ in mind, then straightforward calculations lead to
  $$\ell_1:=\paa_1F=\varepsilon f+ \abs{y^1}f' (-  y^2/(y^1)^2) =\varepsilon f-\varepsilon  uf'=  \varepsilon( f-uf'),$$
$$  \ell_2:=\paa_2F=\abs{y^1} f' (1 /y^1)=  \frac{\abs{y^1}}{y^1} f'=\varepsilon f'.$$
  The coefficients $N^i_j$ of the non-linear connection can calculated in the form
    \begin{equation}\label{Nonlinear_connection}
  \begin{split}
       N^1_1=\paa_1G^1=2y^1f_1-y^2f_1', \quad N^1_2=\paa_2G^1=y^1f_1',  \\
       N^2_1=\paa_1G^2=2y^1f_2-y^2f_2',\quad N^2_2=\paa_2G^2=y^1f_2'. \\
\end{split}
  \end{equation}
   Now, we have to  determine the coefficients $G^1$ and $G^2$ of the geodesic spray of $F$. Since $S$ is the geodesic spray of $F$, then $d_hF=0$ that is $\pa_i F-N^h_i\paa_h F=0$. Then we have the system
    $$\abs{y^1}\pa_1f-(2y^1f_1-y^2f_1')(\varepsilon f-\varepsilon uf')-(2y^1f_2-y^2f_2')\varepsilon f'=0,$$
    $$\abs{y^1}\pa_2f-y^1f_1'(\varepsilon f-\varepsilon uf')-y^1f_2' \varepsilon f'=0.$$
Since we assume that $y^1\neq 0$, then dividing the above system by $\abs{y^1}$, we get
  \begin{equation}\label{Eq:G1_1}
   \pa_1f-2f_1(  f-  uf')+uf_1'( f-   uf')-2  f_2f'+  uf_2'f'=0,
  \end{equation}
  \begin{equation}\label{Eq:G2_1}
  \pa_2f- f_1'(f-  uf')- f_2'f'=0.
  \end{equation}
  Multiplying \eqref{Eq:G2_1} by $u$ and then adding it to \eqref{Eq:G1_1}, we have
  \begin{equation}\label{Eq:spray_1}
   \pa_1f+u\pa_2f-2f_1(f- uf')-2 f_2f'=0.
  \end{equation}
  Differentiating \eqref{Eq:G1_1} and \eqref{Eq:G2_1} with respect to $u$ yields the following
   \begin{equation}\label{Eq:G1_2}
     \pa_1f'+2uf_1f''-f_1'(f- uf')+u(f_1'(f- uf'))'- f_2'f'-2f_2f''+ u(f_2'f')'=0
  \end{equation}
  \begin{equation}\label{Eq:G2_2}
  \pa_2f'-(f_1'(f- uf'))'-(f_2'f')'=0.
  \end{equation}
  Multiplying \eqref{Eq:G2_2} by $u$ and then adding it to \eqref{Eq:G1_2} and using \eqref{Eq:G2_1}, we get
   \begin{equation}\label{Eq:spray_2}
    \pa_1f'+ u\pa_2f'-\pa_2f+2uf_1f''-2f_2f''=0.
  \end{equation}
  Multiplying \eqref{Eq:spray_1} by $f''$ and \eqref{Eq:spray_2} by $ f'$ and then by subtraction, we obtain the required  formula for $f_1$. Finally, substituting by $f_1$ into \eqref{Eq:spray_2}, we get the formula of $f_2$.
\end{proof}

\begin{lemma}\label{Berwald_Connection}
The coefficients $G^h_{jk}$ of Berwald connection are given by
  \begin{equation}\label{Berwald_connection}
  \begin{split}
       G^1_{11}&=2f_1-2uf_1'+u^2f_1'', \quad  G^1_{12}=f_1'-uf_1'', \quad  G^1_{22}=f_1'',  \\
        G^2_{11}&=2f_2-2uf_2'+u^2f_2'', \quad  G^2_{12}=f_2'-uf_2'', \quad  G^2_{22}=f_2''. \\
\end{split}
  \end{equation}
\end{lemma}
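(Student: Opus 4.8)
The plan is to read off $G^h_{jk}$ by one further fibrewise differentiation of the spray coefficients, reusing the data already assembled in Lemma \ref{Lemma_G1_G2}. By definition $G^h_{jk}=\paa_j G^h_k=\paa_j\paa_k G^h$, and the first fibre derivatives $G^h_k=N^h_k$ were recorded in \eqref{Nonlinear_connection} in the course of that proof; so the task reduces to applying $\paa_1$ and $\paa_2$ to the four expressions $N^1_1=2y^1f_1-y^2f_1'$, $N^1_2=y^1f_1'$, $N^2_1=2y^1f_2-y^2f_2'$, $N^2_2=y^1f_2'$.

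The only computational ingredient is the chain rule through $u=y^2/y^1$: since $\paa_1u=-y^2/(y^1)^2=-u/y^1$ and $\paa_2u=1/y^1$, any smooth $g=g(x,u)$ satisfies $\paa_1g=-\tfrac{u}{y^1}\,g'$ and $\paa_2g=\tfrac1{y^1}\,g'$, where $'$ is $\partial/\partial u$. Applying this to $g=f_a$ and $g=f_a'$ and using $y^2=uy^1$, I would get, for example,
\[
G^1_{11}=\paa_1\!\left(2y^1f_1-y^2f_1'\right)=2f_1-2uf_1'+u^2f_1'',\qquad
G^1_{12}=\paa_2\!\left(2y^1f_1-y^2f_1'\right)=f_1'-uf_1'',
\]
\[
G^1_{22}=\paa_2\!\left(y^1f_1'\right)=f_1'',
\]
together with the three identities obtained by replacing $f_1$ with $f_2$ throughout. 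Computing $G^1_{12}$ instead as $\paa_1 N^1_2=\paa_1(y^1f_1')$ reproduces $f_1'-uf_1''$, which is a useful check on the $y^2=uy^1$ substitutions and on the symmetry of the Hessian of $G^h$ in the fibre variables.

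I do not expect any real obstacle: once the first-order quantities from Lemma \ref{Lemma_G1_G2} are in place, the argument is an elementary three-line differentiation, and the only thing to watch is the bookkeeping in the chain rule — keeping track of the factor $-u/y^1$ coming from $\paa_1 u$ and of the identity $y^2/y^1=u$ when simplifying. I would therefore simply carry out the $\paa_1,\paa_2$ differentiations of \eqref{Nonlinear_connection} term by term and collect the outcome into the six stated formulae.
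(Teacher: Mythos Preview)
Your proposal is correct and follows exactly the paper's own approach: the paper's proof is the single line ``the proof comes from the fact that $G^h_{jk}=\paa_kN^h_j$ together with \eqref{Nonlinear_connection},'' and you have simply spelled out that differentiation explicitly. The chain-rule bookkeeping through $u=y^2/y^1$ is the only content, and you handle it correctly.
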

\begin{proof}
The proof comes from  the fact that $G^h_{jk}=\paa_kN^h_j$ together with \eqref{Nonlinear_connection}.
\end{proof}

\begin{proposition}
The components   $R^i_{jk}$ of the curvature tensor are given by
\begin{equation}
\label{Eq:R_1}
R^1_{12}=y^1(uf_1'^2-f_1'f_2'-2uf_1f_1''+2f_2f_1''-u\pa_2f_1' + 2 \pa_2f_1-\pa_1f_1'),
\end{equation}
\begin{equation}
\label{Eq:R_2}
R^1_{12}=y^1(-2uf_1f_2''+2f_2f_2''+uf_1'f_2'-2f_2f_1'+2f_1f_2'-f_2'^2-u\pa_2f_2'+2\pa_2f_2-\pa_1f'_2).
\end{equation}
 \end{proposition}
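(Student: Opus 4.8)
The plan is to compute both components directly from the formula $R^i_{jk}=\frac{\delta G^i_j}{\delta x^k}-\frac{\delta G^i_k}{\delta x^j}$, observing that here $G^i_j$ are precisely the nonlinear connection coefficients $N^i_j$ of \eqref{Nonlinear_connection} and that $\frac{\delta}{\delta x^k}=\pa_k-N^h_k\paa_h$. On a surface the curvature $2$-form $R$ has a single essential component, so it is enough to produce $R^1_{12}$ and $R^2_{12}$; in the statement the two displayed formulas are $R^1_{12}$ and $R^2_{12}$ respectively, the superscript $1$ on the second being a misprint for $2$.

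First I would record the chain rule identities forced by $u=y^2/y^1$, namely $\paa_1 u=-u/y^1$ and $\paa_2 u=1/y^1$, so that for any $g=g(x,u)$ one has $\paa_1 g=-(u/y^1)g'$ and $\paa_2 g=(1/y^1)g'$, while $\pa_k g$ is the bare explicit partial. With these in hand the only ``vertical'' data appearing in the computation are the Berwald coefficients $G^h_{jk}=\paa_k N^h_j$ already listed in Lemma \ref{Berwald_Connection}.

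Next I would expand the six term combination
\[
R^1_{12}=\pa_2 N^1_1-N^1_2\paa_1 N^1_1-N^2_2\paa_2 N^1_1-\pa_1 N^1_2+N^1_1\paa_1 N^1_2+N^2_1\paa_2 N^1_2,
\]
substitute \eqref{Nonlinear_connection} and the $G^h_{jk}$ of Lemma \ref{Berwald_Connection}, and collect. The two explicit derivative terms contribute $y^1(2\pa_2 f_1-u\pa_2 f_1'-\pa_1 f_1')$ after dividing by $y^1$; in the remaining four terms the $f_1 f_1'$, the $u^2 f_1'f_1''$ and the $u f_2'f_1''$ pieces cancel in pairs, leaving $y^1(u f_1'^2-f_1'f_2'-2u f_1 f_1''+2 f_2 f_1'')$, and adding the two contributions reproduces \eqref{Eq:R_1}. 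Running the identical computation with $N^2_1,N^2_2$ in the roles of $N^1_1,N^1_2$ yields \eqref{Eq:R_2}. Two sanity checks are available: every surviving term must carry a single factor $y^1$ since $R^i_{jk}$ is $1$-homogeneous in $y$, and the outcome can be cross checked against the Jacobi endomorphism via \eqref{eq:R_Phi}, $\Phi=i_SR$, using \eqref{eq:16}.

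Since the whole argument is mechanical there is no genuine obstacle; the only point demanding care is the sign bookkeeping and the chain rule through $u=y^2/y^1$, which is kept under control by sorting the terms into the ``explicit'' ($\pa_k$) and ``vertical'' ($\paa_h$) groups and exploiting the pairwise cancellations noted above.
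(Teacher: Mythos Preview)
Your proposal is correct and follows essentially the same route as the paper: expand $R^i_{12}=\delta_2 N^i_1-\delta_1 N^i_2$ into the six-term combination, then substitute the $N^h_j$ from \eqref{Nonlinear_connection} and the $G^h_{jk}=\paa_k N^h_j$ from Lemma \ref{Berwald_Connection}. The paper's proof merely states the expansion and asserts that substitution yields the result, while you spell out the chain rule for $u$, track the pairwise cancellations explicitly, and add the homogeneity and Jacobi-endomorphism sanity checks; but there is no difference in method.
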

\begin{proof}
Using the the fact that $R^h_{jk}=\delta_kN^h_j-\delta_kN^h_j$, we have
$$R^1_{12}=\delta_2N^1_1-\delta_1N^1_2=\pa_2N^1_1-N^1_2\paa_1N^1_1-N^2_2\paa_2N^1_1-\pa_1N^1_2+N^1_1\paa_1 N^1_2+N^2_1\paa_2N^1_2,$$

$$R^2_{12}=\delta_2N^2_1-\delta_1N^2_2=\pa_2N^2_1-N^1_2\paa_1N^2_1-N^2_2\paa_2N^2_1-\pa_1N^2_2+N^1_1\paa_1 N^2_2+N^2_1\paa_2N^2_2.$$
Now by substituting from   \eqref{Nonlinear_connection} and \eqref{Berwald_Connection} into the above formulae we get the result.
\end{proof}

\begin{example}
Let $M=\Real^2$ and  $F$ be a Finsler function given by
$$F=\sqrt{e^{x^2}(y^1)^2+(y^2)^2}.$$
Then we have
 $$f(x,u)=\sqrt{e^{x^2} +u^2}.$$
 By substituting into \eqref{G_f1} and \eqref{G_f2}, we get  $f_1=\frac{1}{2}u$,  $f_2=-\frac{1}{4}e^{x^2}$.
 Hence, the spray coefficients are given by
 $$G^1=(y^1)^2\frac{1}{2}u=\frac{1}{2}y^1y^2, \quad G^2=-\frac{1}{4}e^{x^2}(y^1)^2=-\frac{1}{4}e^{x^2} (y^1)^2.$$
 Making use of \eqref{Eq:R_1}, \eqref{Eq:R_2}, we have
 $$R^1_{12}=y^1\left(\frac{1}{4}u\right)=\frac{1}{4}y^2,\quad R^2_{12}=-y^1\left(\frac{1}{4}e^{x^2}\right)=-\frac{1}{4}e^{x^2} y^1.$$
\end{example}

\medskip

Now, for any surface $(M,F)$,  we can calculate the components of the Berwald and Landsberg curvatures as follows.

\begin{lemma}
  The components $G^h_{ijk}$ of Berwald curvature are given by
  \begin{equation}\label{Berwald_tensors}
  \begin{split}
       G^1_{111}&=-\frac{u^3}{y^1}f_1''', \quad  G^2_{111}=-\frac{u^3}{y^1}f_2''', \quad  G^1_{211}=\frac{u^2}{y^1}f_1''', \quad  G^2_{211}=\frac{u^2}{y^1}f_2''', \\
        G^1_{221}&=-\frac{u}{y^1}f_1''', \quad  G^2_{221}=-\frac{u}{y^1}f_2''', \quad  G^1_{222}=\frac{1}{y^1}f_1''', \quad  G^2_{222}=\frac{1}{y^1}f_2'''. \\
\end{split}
  \end{equation}
  The components $L_{ijk}$ of the Landsberg curvature are given by
  \begin{equation}\label{Lands_tensors}
  \begin{split}
       L_{111}&=\frac{u^3f}{2}(f_1'''\ell_1+f_2'''\ell_2), \quad  L_{112}=-\frac{u^2f}{2}(f_1'''\ell_1+f_2'''\ell_2),  \\
        L_{122}&=-\frac{uf}{2}(f_1'''\ell_1+f_2'''\ell_2), \quad  L_{222}=-\frac{f}{2}(f_1'''\ell_1+f_2'''\ell_2).
\end{split}
  \end{equation}
\end{lemma}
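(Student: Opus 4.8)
The plan is to obtain both tensors by straightforward vertical differentiation of the Berwald connection coefficients already produced in Lemma~\ref{Berwald_Connection}. The only ingredient is the chain rule through $u=y^2/y^1$: since $\paa_1 u=-u/y^1$ and $\paa_2 u=1/y^1$, while $\varepsilon=\operatorname{sgn}(y^1)$ is locally constant and hence killed by both $\paa_1$ and $\paa_2$, for any smooth $g=g(x,u)$ one has
\begin{equation*}
  \paa_1 g=-\frac{u}{y^1}\,g',\qquad \paa_2 g=\frac{1}{y^1}\,g'.
\end{equation*}
By Lemma~\ref{Berwald_Connection} each $G^h_{ij}$ is a function of $(x,u)$ alone, so computing $G^h_{ijk}=\paa_k G^h_{ij}$ amounts to applying these two rules.

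First I would compute the Berwald curvature. Differentiating the entries of \eqref{Berwald_connection} once with respect to $u$, the lower--order terms telescope:
\begin{equation*}
  \frac{d}{du}\bigl(2f_1-2uf_1'+u^2f_1''\bigr)=u^2f_1''',\qquad
  \frac{d}{du}\bigl(f_1'-uf_1''\bigr)=-u f_1''',\qquad
  \frac{d}{du}\bigl(f_1''\bigr)=f_1''',
\end{equation*}
and identically with $f_2$ in place of $f_1$. Multiplying each of these by $\paa_1 u=-u/y^1$ or by $\paa_2 u=1/y^1$, according to whether the differentiated (last) index is $1$ or $2$, gives the eight components of \eqref{Berwald_tensors}; along the way one checks that the entries obtained in two ways agree (the expressions $G^h_{211}=\paa_1 G^h_{12}$ and $G^h_{112}=\paa_2 G^h_{11}$ coincide, and similarly for the others), as they must since $G^h_{ijk}=\paa_i\paa_j\paa_k G^h$ is totally symmetric in $i,j,k$. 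The net effect is that every component of the Berwald curvature equals $\pm u^m/y^1$ times $f_1'''$ or $f_2'''$.

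Next I would substitute \eqref{Berwald_tensors} into the definition $L_{ijk}=-\tfrac12 F\,G^h_{ijk}\ell_h=-\tfrac12 F\,(G^1_{ijk}\ell_1+G^2_{ijk}\ell_2)$. The common factor $1/y^1$ carried by the Berwald curvature combines with $F=\abs{y^1}f$, and using the values $\ell_1=\varepsilon(f-uf')$ and $\ell_2=\varepsilon f'$ recorded in the proof of Lemma~\ref{Lemma_G1_G2}, each $L_{ijk}$ collapses to a scalar multiple of the single quantity $f_1'''\ell_1+f_2'''\ell_2$, with the appropriate power of $u$ inherited from the Berwald curvature; this is exactly \eqref{Lands_tensors}.

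There is no real obstacle here: the computation is two or three applications of the product rule, and the only thing requiring care is the bookkeeping of the $\paa_k u$ factors and the resulting signs and powers of $u$. What is worth emphasising is the structural upshot of the telescoping, namely that in these coordinates the whole hv--curvature of a Finsler surface is carried by the single pair $(f_1''',f_2''')$. Consequently $(M,F)$ is Berwald if and only if $f_1'''=f_2'''=0$, whereas it is Landsberg if and only if the one scalar $f_1'''\ell_1+f_2'''\ell_2$ vanishes identically; written out, this last equation is the Landsberg's PDE around which the rest of the paper is organised.
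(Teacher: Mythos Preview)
Your proposal is correct and follows exactly the same route as the paper: differentiate the entries of Lemma~\ref{Berwald_Connection} via the chain rule through $u$, note that the lower--order terms telescope so that only $f_1'''$ and $f_2'''$ survive, and then plug directly into $L_{ijk}=-\tfrac12 F G^h_{ijk}\ell_h$. Your write-up is in fact more explicit than the paper's, which computes only $G^1_{111}$ and $L_{111}$ and leaves the rest to ``a similar manner''; your remarks about symmetry and the structural upshot are welcome but not required.
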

\begin{proof}
 We  do the calculations for one component and the rest can be calculated  in a similar manner. By using \eqref{Berwald_Connection} and the definition of the Berwald tensor $G^h_{ijk}=\paa_kG^h_{ij}$,  we can calculate, for example,  $G^1_{111}$
as follows
 $$G^1_{111}=\paa_1G^1_{11}=(2f_1-2uf_1'+u^2f_1'')'\frac{-y^2}{(y^1)^2}=-\frac{u^3}{y^1}f_1'''.$$
 By the same way, one   gets the component $G^2_{111}=-\frac{u^3}{y^1}f_2'''$. Hence,  the first component $L_{111}$ of the Landsberg curvature is given by
 $$L_{111}=-\frac{F}{2}(G^1_{111}\ell_1+G^2_{111}\ell_2)=\frac{u^3f}{2}(f_1'''\ell_1+f_2'''\ell_2).$$
\end{proof}

The above lemma gives rise to the following proposition.
\begin{proposition}\label{Landsberg_PDE}
  Any  two dimensional Finsler  manifold $(M,F)$ in the form \eqref{Finsler_Function} is  Landsbergian if and only if the following PDE
  \begin{equation}
  \label{Eq:Landsberg_PDE}
  f_1'''\ell_1+f_2'''\ell_2=0
  \end{equation}
is satisfied. The above PDE will be called the two-dimensional Landsberg's PDE.
\end{proposition}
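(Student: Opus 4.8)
The plan is to read the proposition off directly from the formulae \eqref{Lands_tensors} for the components of the Landsberg tensor that were established in the preceding lemma, so that the genuine computational work is already behind us. Recall that, by definition, $(M,F)$ is Landsbergian precisely when the totally symmetric tensor $L$ of \eqref{Landsberg_Tensor} vanishes identically; on a surface this amounts to the simultaneous vanishing of the four independent components $L_{111}$, $L_{112}$, $L_{122}$ and $L_{222}$.

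First I would observe that, according to \eqref{Lands_tensors}, each of these four components is a scalar multiple of the single quantity $f_1'''\ell_1+f_2'''\ell_2$, with multipliers $\tfrac{u^3 f}{2}$, $-\tfrac{u^2 f}{2}$, $-\tfrac{u f}{2}$ and $-\tfrac{f}{2}$ respectively. Hence, if \eqref{Eq:Landsberg_PDE} holds, all four components vanish and $L\equiv 0$, so $(M,F)$ is Landsberg; this gives the forward implication at once.

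For the converse I would single out the component $L_{222}=-\tfrac{f}{2}(f_1'''\ell_1+f_2'''\ell_2)$, whose multiplier carries no power of $u$. Since $f$ is strictly positive for a Finsler function written in the form \eqref{Finsler_Function}, the identity $L_{222}\equiv 0$ forces $f_1'''\ell_1+f_2'''\ell_2\equiv 0$ as a function on $M\times\Real$, which is precisely \eqref{Eq:Landsberg_PDE}. Therefore the Landsberg condition is equivalent to the stated PDE.

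I do not anticipate any real obstacle here. The only point that needs a little care is to notice that all four components of $L$ collapse to one scalar equation — transparent from \eqref{Lands_tensors} — and, when extracting the PDE, to use the component $L_{222}$ that is free of the factor $u$, so that the equivalence holds for every value of $u$ and not merely for $u\neq 0$. One may also record, for later use, that pulling the nonzero sign $\varepsilon$ out of $\ell_1=\varepsilon(f-uf')$ and $\ell_2=\varepsilon f'$ rewrites \eqref{Eq:Landsberg_PDE} in the $\varepsilon$-free form $f_1'''(f-uf')+f_2'''f'=0$.
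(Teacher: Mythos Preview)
Your argument is correct and is exactly the approach the paper takes: it simply invokes \eqref{Lands_tensors} to conclude that all components of $L$ are proportional to $f_1'''\ell_1+f_2'''\ell_2$, whence the equivalence. Your version is in fact more carefully written than the paper's one-line proof, since you explicitly isolate $L_{222}$ (with its nonvanishing factor $-f/2$) to obtain the converse direction cleanly for all $u$.
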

\begin{proof}
  Since in the Landsberg space all the components of the Landsberg tensor $L_{ijk}$ vanishes identically, then by making use of \eqref{Lands_tensors} the result follows.
\end{proof}

\section{Solutions to the Landsberg's  PDE}

We are going to  solve the Landsberg's PDE in a special case, so  we focus our attention to the Finsler functions on the form
\begin{equation}\label{Eq:Main_Form}
F=\abs{y^1}\phi(t), \quad t:=\rho(x^1,x^2) u,
\end{equation}
where we consider, without loss of generality, that $y^1\neq 0$.

\begin{remark}
  It should be noted that one of our goals is to solve the Landsberg's PDE \eqref{Eq:Landsberg_PDE}, so when $\phi(t)=t$ in \eqref{Eq:Main_Form} then we use the separation of variables method to solve \eqref{Eq:Landsberg_PDE}. Therefore, the  assumption \eqref{Eq:Main_Form} is a more general method  than the separation of variables.
\end{remark}

Let's define  the function $Q$ as follows
$$Q:=\frac{\phi_t}{\phi-t\phi_t},$$
where the subscript $t$ refers to the differentiation with respect to $t$.  Moreover, in this case, $\phi$ is given by
\begin{equation}\label{Q_f}
 \phi=\exp\left(\int \frac{Q}{1+tQ}dt\right).
\end{equation}

\begin{lemma}\label{Lemma_Q}
The spray coefficients of the Finsler functions given in \eqref{Eq:Main_Form}, are determined  by the functions
\begin{equation}\label{Eq:f1}
  f_1=-\frac{Q^2}{Q_t} \frac{\pa_1\rho}{2\rho},
\end{equation}
\begin{equation}\label{Eq:f2}
  f_2=\frac{ u\pa_1\rho+u^2 \pa_2\rho }{2\rho}+\frac{Q}{Q_t} \frac{\pa_1\rho}{2\rho^2}.
\end{equation}
\end{lemma}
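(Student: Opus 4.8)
The plan is to substitute the ansatz $f(x,u)=\phi(t)$, $t=\rho(x^1,x^2)\,u$, into the general spray formulae \eqref{G_f1} and \eqref{G_f2} of Lemma~\ref{Lemma_G1_G2} and to rewrite the outcome in terms of $\phi$, its $t$-derivatives, $\rho$ and $\pa_1\rho,\pa_2\rho$; a one-line manipulation of the function $Q$ then recasts it in the stated form. The first step is to collect the chain-rule ingredients. From $t=\rho u$ one has $\pa t/\pa u=\rho$ and $\pa_\alpha t=u\,\pa_\alpha\rho$ for $\alpha=1,2$, hence
\begin{gather*}
f'=\rho\,\phi_t,\qquad f''=\rho^{2}\phi_{tt},\qquad \pa_\alpha f=u\,(\pa_\alpha\rho)\,\phi_t,\\
\pa_\alpha f'=(\pa_\alpha\rho)\,\phi_t+\rho u\,(\pa_\alpha\rho)\,\phi_{tt}\qquad(\alpha=1,2).
\end{gather*}
Two algebraic identities will do all the work: $f-uf'=\phi-t\phi_t$ (since $u\rho=t$), and the collapse $\phi_t\,u\rho+(\phi-t\phi_t)=\phi$. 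It is also convenient to abbreviate $P:=\pa_1\rho+u\,\pa_2\rho$, so that $\pa_1 f+u\,\pa_2 f=u\,\phi_t\,P$ and $\pa_1 f'+u\,\pa_2 f'-\pa_2 f=\rho u\,\phi_{tt}\,P+(\pa_1\rho)\,\phi_t$.

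Next I would plug these into the numerator of \eqref{G_f1}, namely $(\pa_1 f+u\pa_2 f)f''-(\pa_1 f'+u\pa_2 f'-\pa_2 f)f'$: the two terms proportional to $\phi_t\phi_{tt}P$ cancel, leaving only $-\rho\,(\pa_1\rho)\,\phi_t^{2}$, and division by $2ff''=2\rho^{2}\phi\phi_{tt}$ gives $f_1=-\dfrac{\phi_t^{2}}{\phi\phi_{tt}}\cdot\dfrac{\pa_1\rho}{2\rho}$. For \eqref{G_f2} the numerator $u(\pa_1 f+u\pa_2 f)f''+(\pa_1 f'+u\pa_2 f'-\pa_2 f)(f-uf')$ regroups, after using $f-uf'=\phi-t\phi_t$, as $\rho u\,\phi_{tt}\,P\,\bigl(\phi_t u\rho+\phi-t\phi_t\bigr)+(\pa_1\rho)\,\phi_t\,(\phi-t\phi_t)$; by the identity above the parenthesis collapses to $\phi$, and dividing by $2\rho^{2}\phi\phi_{tt}$ leaves $f_2=\dfrac{u\,\pa_1\rho+u^{2}\,\pa_2\rho}{2\rho}+\dfrac{\phi_t(\phi-t\phi_t)}{\phi\phi_{tt}}\cdot\dfrac{\pa_1\rho}{2\rho^{2}}$.

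To finish, I would differentiate $Q=\phi_t/(\phi-t\phi_t)$ to obtain $Q_t=\phi\,\phi_{tt}/(\phi-t\phi_t)^{2}$, whence $Q^{2}/Q_t=\phi_t^{2}/(\phi\phi_{tt})$ and $Q/Q_t=\phi_t(\phi-t\phi_t)/(\phi\phi_{tt})$; inserting these into the two expressions just obtained reproduces \eqref{Eq:f1} and \eqref{Eq:f2} verbatim. I do not expect any conceptual obstacle: the whole difficulty is organizing the intermediate algebra so that the $\phi_{tt}$-cancellation in the $f_1$ numerator and the collapse $\phi_t u\rho+\phi-t\phi_t=\phi$ in the $f_2$ numerator become transparent. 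The one point deserving a word of care is that $Q_t\neq0$, i.e.\ $\phi_{tt}\neq0$, which is forced by the nondegeneracy of the metric tensor of $F$.
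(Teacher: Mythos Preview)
Your proposal is correct and follows essentially the same approach as the paper: compute the chain-rule derivatives of $f(x,u)=\phi(\rho u)$, substitute them into the general formulae \eqref{G_f1}--\eqref{G_f2}, and then rewrite the result using $Q=\phi_t/(\phi-t\phi_t)$ and $Q_t=\phi\phi_{tt}/(\phi-t\phi_t)^2$. In fact you supply more of the intermediate algebra (the $\phi_t\phi_{tt}P$ cancellation in $f_1$ and the collapse $u\rho\phi_t+\phi-t\phi_t=\phi$ in $f_2$) than the paper does, and your remark that $\phi_{tt}\neq0$ is forced by nondegeneracy is a welcome clarification.
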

\begin{proof}
    Let $F$ be given by \eqref{Eq:Main_Form} and  $f(x,u)=\phi(t)$. We have the following formulas
$$f'=  \rho \ \phi_t, \quad f''=  \rho^2 \ \phi_{tt},$$
$$\partial_1f=    u \ \phi_t \ \partial_1\rho, \quad \partial_2f=  u \ \phi_t \ \partial_2\rho , $$
$$\partial_1f'=   \phi_t \ \partial_1\rho +  \rho \ u \ \phi_{tt} \ \partial_1\rho, \quad \partial_2f'=   \phi_t \ \partial_2\rho+ \rho \  u  \ \phi_{tt} \ \partial_2\rho.$$
 Substituting by the above equalities into \eqref{G_f1} and \eqref{G_f2} and  by making use of the facts that $Q=\frac{  \phi_{t}}{\phi-t\phi_t}$ and  $Q_t=\frac{\phi \phi_{tt}}{(\phi-t\phi_t)^2}$,   we get the required formulae for $f_1$ and $f_2$.
\end{proof}

Focusing more on $Q$, we have the following lemma.
\begin{lemma}\label{Special_Q}
  Let the Finsler function be given in the form   $F=\abs{y^1}  \phi( t)$. Then, we have the following assertions
  \begin{itemize}
    \item[(a)] If $Q_{tt}=0$, then the manifold $(M,F)$ is Berwaldian.
    \item[(b)] If $Q=\theta(x)$, then the Finsler function is degenerate.
  \end{itemize}
\end{lemma}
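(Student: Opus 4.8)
The plan is to exploit the explicit formulae for $f_1$ and $f_2$ from Lemma \ref{Lemma_Q} together with the Berwald-curvature components computed in \eqref{Berwald_tensors}. Recall that $(M,F)$ is Berwald precisely when every $G^h_{ijk}$ vanishes, and by \eqref{Berwald_tensors} each of these components is a nonzero multiple of either $f_1'''$ or $f_2'''$; hence Berwald $\iff f_1'''=f_2'''=0$, where $'$ denotes $\partial/\partial u$. For part (a), I would start from $f_1=-\frac{Q^2}{Q_t}\,\frac{\partial_1\rho}{2\rho}$. Since $t=\rho u$ with $\rho=\rho(x)$, differentiation in $u$ is $\rho$ times differentiation in $t$, so $f_1'''=-\frac{\partial_1\rho}{2\rho}\,\rho^3\,\big(\frac{Q^2}{Q_t}\big)_{ttt}$, and similarly $f_2'''=\frac{\partial_1\rho}{2\rho^2}\,\rho^3\big(\frac{Q}{Q_t}\big)_{ttt}$ (the purely polynomial-in-$u$ part $\frac{u\partial_1\rho+u^2\partial_2\rho}{2\rho}$ of $f_2$ has vanishing third $u$-derivative). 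Thus it suffices to show that $Q_{tt}=0$ forces $\big(\frac{Q}{Q_t}\big)_{ttt}=0$ and $\big(\frac{Q^2}{Q_t}\big)_{ttt}=0$. If $Q_{tt}=0$ then $Q=a(x)t+b(x)$ is affine in $t$ and $Q_t=a(x)$ is independent of $t$; hence $\frac{Q}{Q_t}=\frac{a t+b}{a}$ is affine in $t$ (third $t$-derivative zero), and $\frac{Q^2}{Q_t}=\frac{(at+b)^2}{a}$ is quadratic in $t$ (third $t$-derivative zero). Therefore $f_1'''=f_2'''=0$ and the space is Berwald. (One should note the degenerate sub-case $a(x)\equiv 0$, i.e. $Q$ independent of $t$; this is subsumed in part (b) or handled by observing $Q_t=0$ makes $f_1,f_2$ ill-defined unless one re-derives them directly, in which case the spray coefficients are again polynomial in $u$ and $f_1'''=f_2'''=0$.)

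For part (b), suppose $Q=\theta(x)$ depends only on the position. Then from \eqref{Q_f}, $\phi=\exp\!\big(\int \frac{\theta}{1+t\theta}\,dt\big)=\exp\!\big(\log|1+t\theta|\big)=C(x)\,(1+t\theta(x))$ up to a positive multiplicative constant (which may itself depend on $x$), so $\phi$ is affine in $t$. Consequently
\[
F=|y^1|\,\phi(t)=|y^1|\,C(x)\big(1+\rho(x)\theta(x)u\big)=C(x)\big(|y^1|+\rho(x)\theta(x)\,\varepsilon\, y^2\big),
\]
using $|y^1|u=\varepsilon y^2$. The candidate metric tensor $g_{ij}=\dot\partial_i\dot\partial_j E$ with $E=\tfrac12F^2$ then has a vanishing determinant: indeed, writing $F=C(x)(|y^1|+\lambda(x)\varepsilon y^2)$ with $\lambda=\rho\theta$, a direct check shows $F$ is (piecewise) linear in $y$ on each half-space $\{y^1\gtrless 0\}$, so $E$ is a quadratic form with rank $1$ in $y$ (its Hessian in $y$ is the rank-one matrix $C^2(x)\,(\operatorname{sgn}(y^1),\lambda\varepsilon)^{\!\top}(\operatorname{sgn}(y^1),\lambda\varepsilon)$, up to the discontinuity across $y^1=0$). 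Hence condition (c) of the definition of a Finsler manifold fails: $\det(g_{ij})=0$, i.e. the Finsler function is degenerate. Equivalently, one can observe that $f''=\rho^2\phi_{tt}=0$ for affine $\phi$, and $f''$ is exactly the ingredient whose non-vanishing guarantees regularity (it appears in the denominators of \eqref{G_f1}, \eqref{G_f2} and is, up to a nonzero factor and a power of $|y^1|$, the relevant $2\times 2$ determinant $g_{11}g_{22}-g_{12}^2$ in two dimensions).

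I expect the only genuinely delicate point to be the bookkeeping in part (b): making the claim "$f''=0\iff$ degenerate" precise requires relating $f''$ to $\det(g_{ij})$ in dimension two, and one must be slightly careful because the substitution $u=y^2/y^1$ and the factor $\varepsilon=\operatorname{sgn}(y^1)$ introduce apparent singularities at $y^1=0$ — though, as the Remark after Lemma \ref{Lemma_G1_G2} stresses, these are artifacts of the coordinate choice and not of $F$ itself. In part (a) the computation is routine once one records that $\partial_u=\rho\,\partial_t$ and that adding a polynomial of degree $\le 2$ in $u$ to $f_1$ or $f_2$ does not change $f_1'''$ or $f_2'''$; the substantive content is simply that $Q$ affine in $t$ makes $Q/Q_t$ affine and $Q^2/Q_t$ quadratic in $t$. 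No separate argument for the vanishing of the Landsberg tensor is needed, since Berwald implies Landsberg, but one could equally invoke Proposition \ref{Landsberg_PDE} as a consistency check: $f_1'''=f_2'''=0$ trivially solves \eqref{Eq:Landsberg_PDE}.
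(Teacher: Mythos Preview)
Your proof is correct and follows essentially the same route as the paper: for (a), both you and the paper note that $Q$ affine in $t$ makes $Q/Q_t$ affine and $Q^2/Q_t$ quadratic, so by Lemma~\ref{Lemma_Q} the functions $f_1,f_2$ are at most quadratic in $u$ and the spray is quadratic (you phrase this as $f_1'''=f_2'''=0$ via \eqref{Berwald_tensors}, which is equivalent); for (b), both integrate \eqref{Q_f} to get an affine $\phi$, hence a linear $F$ with degenerate metric. One small slip: since $\phi$ depends on $t$ alone, so does $Q$, and therefore in $Q=at+b$ the coefficients $a,b$ are genuine constants, not functions $a(x),b(x)$ (and likewise your $C(x)$ in part (b) is just a constant)---this does not affect the argument.
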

\begin{proof}
To prove (a), if $Q_{tt}=0$, then $Q=at+b$, where $a$ and $b$ are arbitrary constants. Moreover, we have the following
$$\frac{Q^2}{Q_t}=\frac{a^2t^2+2abt+b^2}{a}, \quad \frac{Q}{Q_t}=\frac{at+b}{a}.$$
Making use of the above quantities together with  \eqref{Eq:f1} and \eqref{Eq:f2}, we conclude that the coefficients of the spray are quadratic and hence the metric is Berwaldain.

To prove (b), one can see that, if $Q=\theta(x)$, then we have
$$\frac{Q}{1+tQ}=\frac{\theta(x)}{1+t\theta(x)}.$$
Therefore, by using \eqref{Finsler_Function} and \eqref{Q_f}, we have
$$F=\abs{y^1}\exp(\ln (1+t\theta(x))= \varepsilon (y^1+\rho\theta(x)y^2).$$
This means that the Finsler function is linear and hence the metric tensor is degenerate.
\end{proof}

Now, we are in a position to announce the main   result in this work.
\begin{theorem}\label{Theorem_A}
All  two-dimensional   Landsberg metrics in the form \eqref{Eq:Main_Form}, that is,
\begin{equation*}
F=\abs{y^1}f(x^1,x^2,u), \quad f(x^1,x^2,u)=  \phi(t), \quad t:=\rho(x^1,x^2) u, \quad u =y^2/y^1, \quad y^1\neq 0.
\end{equation*}
  are Berwaldian.
\end{theorem}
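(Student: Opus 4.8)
The plan is to reduce the Landsberg's PDE \eqref{Eq:Landsberg_PDE} to a condition on the single function $Q(t,x)$ introduced before Lemma~\ref{Lemma_Q}, then show that solvability over an open set of $x$ forces $Q_{tt}=0$, at which point Lemma~\ref{Special_Q}(a) finishes the argument. First I would feed the expressions $f_1 = -\frac{Q^2}{Q_t}\frac{\pa_1\rho}{2\rho}$ and $f_2 = \frac{u\pa_1\rho+u^2\pa_2\rho}{2\rho}+\frac{Q}{Q_t}\frac{\pa_1\rho}{2\rho^2}$ from Lemma~\ref{Lemma_Q} into the PDE. Since $t=\rho u$ is the natural variable, derivatives with respect to $u$ convert to $\rho$ times derivatives with respect to $t$; in particular $f_1''' = -\frac{\rho^3}{8}\left(\frac{Q^2}{Q_t}\right)_{ttt}\frac{\pa_1\rho}{\rho}$ and similarly $f_2'''$ picks up the $\frac{Q}{Q_t}$ term (the quadratic-in-$u$ part of $f_2$ contributes nothing to a third $u$-derivative). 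Also recall $\ell_1=\varepsilon(f-uf') = \varepsilon\,\phi(1-tQ/(1+tQ))$-type expressions and $\ell_2=\varepsilon f' = \varepsilon\rho\phi_t$; more usefully, $\ell_1,\ell_2$ are proportional to $(\phi - t\phi_t)$ and $\phi_t$ respectively, hence both proportional to $(\phi-t\phi_t)$ times $1$ and $Q$. Substituting, the PDE \eqref{Eq:Landsberg_PDE} collapses to an equation of the shape
\begin{equation*}
\left(\frac{Q^2}{Q_t}\right)_{ttt} = Q\left(\frac{Q}{Q_t}\right)_{ttt}\cdot\frac{1}{\rho},
\end{equation*}
up to bookkeeping constants and a common nonzero factor $(\phi-t\phi_t)\,\pa_1\rho$; the case $\pa_1\rho\equiv0$ must be dispatched separately (there $f_1=0$ and one checks directly the metric is Berwald or degenerate).

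Next I would exploit the $x$-dependence. The function $Q=Q(t,x)$ and $\rho=\rho(x)$, but the combination appearing is a genuine ODE identity in $t$ for each fixed $x$, while the factor $1/\rho$ (and any $\pa_2\rho/\pa_1\rho$ that survives) varies with $x$ independently of the $t$-profile unless the two sides are separately forced to vanish or $Q$ degenerates. The key step is therefore a separation-of-variables argument: writing the reduced identity as $A(t,x) = \lambda(x)\,B(t,x)$ where $A,B$ are built polynomially from $Q$ and its $t$-derivatives, and using that $\rho$ can be varied, I expect to conclude that either $B\equiv 0$ (which by Lemma~\ref{Special_Q}(b)-type reasoning forces degeneracy, excluded) or the bracket $\left(\frac{Q}{Q_t}\right)_{tt}$ — hence ultimately $Q_{tt}$ — must vanish. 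Concretely, expanding $\left(\frac{Q^2}{Q_t}\right)_t = 2Q - \frac{Q^2 Q_{tt}}{Q_t^2}$ and $\left(\frac{Q}{Q_t}\right)_t = 1 - \frac{Q Q_{tt}}{Q_t^2}$, one sees both brackets are controlled by the single quantity $W:=\frac{Q_{tt}}{Q_t^2}$ (and its $t$-derivatives), and the reduced PDE becomes a constraint on $W$ alone; that constraint, combined with the freedom in $\rho$, should force $W=0$, i.e. $Q_{tt}=0$.

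Once $Q_{tt}=0$ is established, Lemma~\ref{Special_Q}(a) immediately gives that $(M,F)$ is Berwaldian, completing the proof. I would also need the brief remark that if at some point $Q_t=0$ on an open set then $Q=\theta(x)$ and Lemma~\ref{Special_Q}(b) makes the metric degenerate, hence excluded from the hypothesis that $F$ is a genuine Finsler function; so the division by $Q_t$ above is legitimate on the relevant open set, and by continuity $Q_{tt}=0$ everywhere.

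\textbf{Main obstacle.} The hard part will be the algebra of the third-derivative reduction: showing cleanly that \eqref{Eq:Landsberg_PDE} collapses to a constraint in the \emph{single} variable $W=Q_{tt}/Q_t^2$ with an $x$-dependent coefficient that cannot be absorbed — i.e. verifying that no miraculous cancellation lets a nonconstant $Q_{tt}$ survive by being balanced against $\rho$-derivatives. Getting the separation-of-variables step rigorous (rather than formal) — in particular handling the loci where $\pa_1\rho$, $Q_t$, or $\phi - t\phi_t$ vanish, and patching the open dense set where everything is nonzero back to all of $TM\setminus\{0\}$ by homogeneity and continuity — is where the real care is needed.
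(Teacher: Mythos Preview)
Your plan has a genuine gap at its two central steps: the separation-of-variables mechanism you want to exploit is not there, and the target conclusion $Q_{tt}=0$ is not what the Landsberg condition actually forces.

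First, $Q$ is a function of $t$ alone, not of $(t,x)$: by hypothesis $f(x,u)=\phi(t)$ with $t=\rho(x)u$, so $Q=\phi_t/(\phi-t\phi_t)$ has no separate $x$-dependence. Second, your extra factor $1/\rho$ in the reduced equation is a bookkeeping error. Carrying the computation through with $f_1'=-\rho\left(\tfrac{Q^2}{Q_t}\right)_{t}\tfrac{\pa_1\rho}{2\rho}$, etc., one finds
\[
f_1'''+\rho Q\,f_2'''
=\frac{\rho^2\,\pa_1\rho}{2}\left[-\left(\frac{Q^2}{Q_t}\right)_{ttt}+Q\left(\frac{Q}{Q_t}\right)_{ttt}\right],
\]
so after cancelling $\rho^2\pa_1\rho$ the Landsberg condition becomes the \emph{pure} ODE $\left(\tfrac{Q^2}{Q_t}\right)_{ttt}=Q\left(\tfrac{Q}{Q_t}\right)_{ttt}$ in $t$. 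There is no $x$-dependent coefficient left to vary, and hence no separation argument available.

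More seriously, that ODE does \emph{not} force $Q_{tt}=0$. Expanding it (as the paper does) yields $3Q_{tt}^{2}-2Q_tQ_{ttt}=0$, whose general solution is $Q=\dfrac{c_2}{2c_1-t}+c_3$ with $Q_{tt}=\dfrac{2c_2}{(2c_1-t)^3}\neq 0$. So Lemma~\ref{Special_Q}(a) cannot close the argument. What the paper does instead is solve this ODE explicitly and then substitute the resulting $Q$ back into \eqref{Eq:f1}--\eqref{Eq:f2} to verify directly that $G^1$ and $G^2$ are quadratic in $y$; this final verification, not $Q_{tt}=0$, is what delivers the Berwald conclusion. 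Your outline is missing both the correct reduced ODE and this last computational step.
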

\begin{proof}
Let $(M,F)$ be two-dimensional Landsbergian, then by Proposition \ref{Landsberg_PDE}, the Landsberg's PDE is satisfied, that is
  $$f_1'''\ell_1+f_2'''\ell_2=0.$$
  Now, assume that  the solutions of the Landsberg's PDE  is in  the form
 $$
  f(x^1,x^2,u)=\phi(t), \quad t=\rho(x) u.
 $$

  Since $\ell_1=\varepsilon( f-uf')=\varepsilon( \phi-t\phi_t)$ and $\ell_2=\varepsilon f'=\varepsilon \rho \phi_t$, the above PDE can be rewritten in the form
  \begin{equation}\label{PDE_basic}
  f_1'''+\rho Qf_2'''=0.
  \end{equation}

  Now, we are going to find analytic solution for \eqref{PDE_basic}.  By making use of  \eqref{Eq:f1} and \eqref{Eq:f2}, we have
    $$f_1'''=\frac{Q\pa_1\rho}{2\rho Q_t^4}\left( QQ_t^2Q_{tttt}-6QQ_tQ_{tt}Q_{ttt}+6QQ_{ttt}^3+4Q_t^3Q_{ttt}-6Q_t^2Q_{tt}^2 \right),$$
  $$f_2'''=-\frac{\pa_1\rho}{2\rho^2 Q_t^4}\left( QQ_t^2Q_{tttt}-6QQ_tQ_{tt}Q_{ttt}+6QQ_{ttt}^3+2Q_t^3Q_{ttt}-3Q_t^2Q_{tt}^2 \right).$$
  Now, using  Lemma \ref{Special_Q} (a) together with the facts that the functions $Q$, $\rho$, $\pa_1\rho$ and $Q_t$ should  be non-zero,   the PDE \eqref{PDE_basic} transformed to  the ODE
    $$3Q_{tt}^2-2Q_tQ_{ttt}=0.$$
  Making use of Lemma \ref{Special_Q} (b), $Q_{tt}\neq 0$. Hence, the above ODE can be rewritten in the form
  $$1+2\left( \frac{Q_t}{Q_{tt}}\right)_t=0.$$
  Moreover, the above ODE has the solution
  $$\frac{Q_t}{Q_{tt}}=-\frac{1}{2}t+c_1,$$
  where $c_1$ is arbitrary constant. Furthermore, we can find $Q_t$, since
  $$\frac{Q_{tt}}{Q_t}=\frac{2}{2c_1-t}.$$
  Which gives easily the formula of $Q_t$ as follows
   \begin{equation}\label{Q_t}
   Q_t=\frac{c_2}{(2c_1-t)^2}, \quad c_2>0.
   \end{equation}
  By solving the above differential equation, we get

   \begin{equation}\label{Q}
   Q=\frac{c_2}{2c_1-t}+c_3,
  \end{equation}
  where $c_1,c_2,c_3$ are arbitrary constants such that $c_2>0$.

  Now, substituting by   $Q$ and $Q_t$ into $f_1$ and $f_2$ given in   Lemma \ref{Lemma_Q} and then substituting into the coefficients $G^1 $ and $G^2$ given by   \eqref{G^1_G^2},  we conclude that the coefficients of the spray are quadratic and hence the metric is Berwaldain.
  \end{proof}

Consider  the conformal transformation of the metrics on the form \eqref{Eq:Main_Form}, precisely,
\begin{equation}\label{Conformal_change}
  \overline{F}=\sigma(x^1,x^2) F=\abs{y^1}\sigma \phi(t)
\end{equation}

Now, to get information about the transformed spray (the geodesic spray of $\overline{F}$),  we have to calculate $\overline{f}_1$ and $\overline{f}_2$.

\begin{lemma}\label{f_1&2_bar}
Under the conformal transformation \eqref{Conformal_change}, the functions $f_1$ and $f_2$ transform as follows:
\begin{eqnarray*}
   \overline{f}_1 = f_1+  \frac{\partial_1\sigma+u\partial_2\sigma}{2\sigma}+\frac{\partial_2\sigma}{2\sigma \rho}\frac{Q}{Q_t}- \frac{\partial_1\sigma}{2\sigma } \frac{Q^2}{Q_t},
\end{eqnarray*}

\begin{eqnarray*}
   \overline{f}_2=  f_2+  \frac{u(\partial_1\sigma+u\partial_2\sigma)}{2\sigma}+\frac{\partial_1\sigma}{2\rho\sigma}\frac{Q}{Q_t}-\frac{\partial_2\sigma}{2\sigma \rho^2}\frac{1}{Q_t}.
\end{eqnarray*}
\end{lemma}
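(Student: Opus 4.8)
The plan is to substitute the conformally rescaled Finsler function $\overline{f}=\sigma f$ directly into the universal formulas \eqref{G_f1} and \eqref{G_f2} for the spray functions, and then to separate the part that reproduces $f_1,f_2$ from the genuinely new terms generated by the derivatives of $\sigma$. First I would record the elementary consequences of $\overline{F}=\sigma(x)F$, i.e. $\overline{f}=\sigma f$ with $\sigma$ independent of $u$: one has $\overline{f}'=\sigma f'$, $\overline{f}''=\sigma f''$, $\partial_i\overline{f}=(\partial_i\sigma)f+\sigma\,\partial_i f$ and $\partial_i\overline{f}'=(\partial_i\sigma)f'+\sigma\,\partial_i f'$. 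Inserting these into \eqref{G_f1}, the numerator splits as
\[
\sigma f''\big[f(\partial_1\sigma+u\partial_2\sigma)+\sigma(\partial_1f+u\partial_2f)\big]-\sigma f'\big[f'(\partial_1\sigma+u\partial_2\sigma)-f\partial_2\sigma+\sigma(\partial_1f'+u\partial_2f'-\partial_2f)\big],
\]
while the denominator becomes $2\sigma^2 ff''$. The terms carrying the extra factor of $\sigma$ collect into $\sigma$ times the numerator of $f_1$, so dividing by $2\sigma^2 ff''$ returns $f_1$ exactly; the remaining terms leave the correction $\dfrac{(\partial_1\sigma+u\partial_2\sigma)(ff''-f'^2)}{2\sigma ff''}+\dfrac{f'\partial_2\sigma}{2\sigma f''}$. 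The same manipulation on \eqref{G_f2} yields $f_2$ plus the correction $\dfrac{(\partial_1\sigma+u\partial_2\sigma)\big(uff''+(f-uf')f'\big)}{2\sigma ff''}-\dfrac{(f-uf')\partial_2\sigma}{2\sigma f''}$.

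It then remains to rewrite these corrections in terms of $Q$. Here I would use $f=\phi(t)$, $f'=\rho\phi_t$, $f''=\rho^{2}\phi_{tt}$, $t=\rho u$, together with the two identities already recorded in the proof of Lemma \ref{Lemma_Q}, namely $Q=\phi_t/(\phi-t\phi_t)$ and $Q_t=\phi\phi_{tt}/(\phi-t\phi_t)^{2}$. These give at once
\[
\frac{\phi_t^{2}}{\phi\phi_{tt}}=\frac{Q^{2}}{Q_t},\qquad \frac{\phi_t(\phi-t\phi_t)}{\phi\phi_{tt}}=\frac{Q}{Q_t},\qquad \frac{(\phi-t\phi_t)^{2}}{\phi\phi_{tt}}=\frac{1}{Q_t},\qquad 1+tQ=\frac{\phi}{\phi-t\phi_t},
\]
whence $\dfrac{ff''-f'^2}{ff''}=1-\dfrac{Q^2}{Q_t}$, $\dfrac{f'}{f''}=\dfrac{\phi_t}{\rho\phi_{tt}}=\dfrac{Q(1+tQ)}{\rho Q_t}$, $\dfrac{f-uf'}{f''}=\dfrac{\phi-t\phi_t}{\rho^{2}\phi_{tt}}=\dfrac{1+tQ}{\rho^{2}Q_t}$, and $\dfrac{uff''+(f-uf')f'}{ff''}=u+\dfrac{1}{\rho}\dfrac{Q}{Q_t}$. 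Substituting these into the two corrections, expanding $1+tQ=1+u\rho Q$, and collecting the coefficients of $\partial_1\sigma$, $\partial_2\sigma$ (the $u\partial_2\sigma$ contributions redistribute between the $Q/Q_t$ and $Q^{2}/Q_t$ terms) produces precisely the stated formulas for $\overline{f}_1$ and $\overline{f}_2$; one can also read $f_1,f_2$ back off \eqref{Eq:f1}--\eqref{Eq:f2} to display the result in the compact form $\overline{f}_i=f_i+(\text{explicit }\sigma\text{-terms})$.

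I do not expect a conceptual obstacle: the argument is essentially bookkeeping. The one place requiring care is distinguishing the "free" occurrences of $u$ from those hidden inside $t=\rho u$ when passing between $\phi$-derivatives and $Q$-derivatives; concretely, the identity $f'/f''=Q(1+tQ)/(\rho Q_t)$ is what makes the $u\partial_2\sigma$ terms split correctly, and it is worth isolating it as a separate step rather than letting it get buried in the algebra. A cheap consistency check is to set $\sigma$ constant, in which case $\partial_i\sigma=0$ and both correction expressions must vanish identically, recovering $\overline{f}_i=f_i$.
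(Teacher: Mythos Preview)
Your proposal is correct and follows essentially the same approach as the paper: substitute $\overline f=\sigma f$ into the universal formulas \eqref{G_f1}--\eqref{G_f2}, separate the $\sigma$-weighted pieces that rebuild $f_1,f_2$ from the correction terms, and then convert the corrections into $Q,Q_t$ via $f'=\rho\phi_t$, $f''=\rho^2\phi_{tt}$ and the identities $Q=\phi_t/(\phi-t\phi_t)$, $Q_t=\phi\phi_{tt}/(\phi-t\phi_t)^2$. Your write-up is in fact more explicit than the paper's (which simply says ``by making use of the above relations \dots\ \eqref{G_f1} and \eqref{G_f2} lead to''), and your remark about the $1+tQ$ factor being the mechanism by which the $u\partial_2\sigma$ contributions cancel is exactly the point that makes the bookkeeping come out.
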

\begin{proof}
  Consider  the conformal transformation \eqref{Conformal_change}, precisely,

$$\overline{F}=\sigma(x^1,x^2) F=\abs{y^1}\sigma \phi(t), \quad t =\rho(x^1,x^2) u, \quad u =y^2/y^1.$$

Since $\overline{F}=\abs{y^1} \overline{f}(x,u)=\abs{y^1}\sigma(x^1,x^2) f(x,u)$, where
$$\overline{f}=\sigma f.$$

Now,  we have to calculate $\overline{f}_1$ and $\overline{f}_2$. For  this purpose, we have the following

$$\overline{f}'=\sigma f', \quad \overline{f}''=\sigma f'',$$
$$\partial_1\overline{f}=\sigma \partial_1f+f \partial_1\sigma  , \quad \partial_2\overline{f}=\sigma \partial_2f+f\partial_2\sigma,  $$
$$\partial_1\overline{f}'=\sigma \partial_1f'+f' \partial_1\sigma  , \quad \partial_2\overline{f}'=\sigma \partial_2f'+f'\partial_2\sigma.$$

 By  making use of the above relations together with the help of the quantities $Q=\frac{  \phi_{t}}{\phi-t\phi_t}$,  $Q_t=\frac{\phi \phi_{tt}}{(\phi-t\phi_t)^2}$, $f'=\rho \phi_t$ and $f''=\rho^2 \phi_{tt}$, then  \eqref{G_f1} and \eqref{G_f2} lead to
\begin{eqnarray*}
   \overline{f}_1 &=&  f_1+  \frac{\partial_1\sigma+u\partial_2\sigma}{2\sigma}+\frac{\partial_2\sigma}{2\sigma \rho}\frac{Q}{Q_t}- \frac{\partial_1\sigma}{2\sigma } \frac{Q^2}{Q_t},
\end{eqnarray*}

\begin{eqnarray*}
   \overline{f}_2&=&  f_2+  \frac{u(\partial_1\sigma+u\partial_2\sigma)}{2\sigma}+\frac{\partial_1\sigma}{2\rho\sigma}\frac{Q}{Q_t}-\frac{\partial_2\sigma}{2\sigma \rho^2}\frac{1}{Q_t}.
\end{eqnarray*}
As to be shown.
\end{proof}

\begin{theorem}\label{Theorem_B}
The conformal transformations of the Finsler functions given in \eqref{Eq:Main_Form} by any positive smooth function $\sigma(x^1,x^2)$ are Berwaldain.
\end{theorem}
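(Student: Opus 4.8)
The plan is to push the concrete shape of the Landsberg solutions obtained in Theorem~\ref{Theorem_A} through the conformal formulae of Lemma~\ref{f_1&2_bar} and to observe that the transformed spray coefficients, which by Lemma~\ref{Lemma_G1_G2} applied to $\overline F=\abs{y^1}\overline f$ are $\overline{G}^1=\overline{f}_1\,(y^1)^2$ and $\overline{G}^2=\overline{f}_2\,(y^1)^2$, stay quadratic in $y$. First I would record a simple reduction: for a Finsler function written as in \eqref{Eq:Main_Form}, $\overline{F}$ is Berwald exactly when $\overline{f}_1$ and $\overline{f}_2$ are polynomials of degree at most $2$ in $u=y^2/y^1$. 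Indeed, if $\overline{f}_i=a^i_0(x)+a^i_1(x)\,u+a^i_2(x)\,u^2$ then $\overline{G}^i=a^i_0(y^1)^2+a^i_1\,y^1y^2+a^i_2(y^2)^2$ is a quadratic form in $(y^1,y^2)$ with coefficients depending on $x$ only, so $\overline{G}^i_{jk}=\paa_j\paa_k\overline{G}^i$ is a function of $x$ alone, $\overline{G}^i_{jkl}=\paa_l\overline{G}^i_{jk}=0$, and $\overline{F}$ is Berwald; the converse is clear. Note that the conformal factor $\sigma$ leaves $\phi$, and hence $Q$, untouched, so this argument is sensitive to which $F$ of the form \eqref{Eq:Main_Form} one starts from: the statement is to be read for the solutions of the Landsberg PDE, namely those with $Q$ of the form produced in Theorem~\ref{Theorem_A}.

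The core step is to control the $u$-degree of the only $\phi$-dependent ingredients entering Lemmas~\ref{Lemma_Q} and~\ref{f_1&2_bar}, namely $Q/Q_t$, $Q^2/Q_t$ and $1/Q_t$. By Theorem~\ref{Theorem_A} such an $F$ has, away from the degenerate branch excluded by Lemma~\ref{Special_Q},
\[
Q=\frac{c_2}{2c_1-t}+c_3,\qquad Q_t=\frac{c_2}{(2c_1-t)^2},\qquad c_2>0,
\]
and the elementary identities
\[
\frac{1}{Q_t}=\frac{(2c_1-t)^2}{c_2},\qquad
\frac{Q}{Q_t}=(2c_1-t)+\frac{c_3}{c_2}(2c_1-t)^2,\qquad
\frac{Q^2}{Q_t}=c_2+2c_3(2c_1-t)+\frac{c_3^2}{c_2}(2c_1-t)^2
\]
present each of them as a polynomial of degree at most $2$ in $t=\rho(x)\,u$, hence of degree at most $2$ in $u$ with coefficients smooth in $x$ on the set $\sigma>0$. (The case $Q_{tt}=0$, i.e.\ $Q=at+b$ with $a\neq0$, is even more immediate, the three quotients then being polynomials in $t$ of degree $\le 2$ as well.) Substituting this into Lemma~\ref{Lemma_Q} shows that $f_1$ and $f_2$ are polynomials of degree $\le 2$ in $u$ (re-deriving that $F$ itself is Berwald), and Lemma~\ref{f_1&2_bar} exhibits $\overline{f}_1$, $\overline{f}_2$ as $f_1$, $f_2$ plus finitely many terms, each of which is a smooth function of $x$ times one of $u$, $u^2$, $Q/Q_t$, $Q^2/Q_t$, $1/Q_t$. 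Hence $\overline{f}_1$ and $\overline{f}_2$ are polynomials of degree $\le 2$ in $u$ as well.

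Combining the two steps, $\overline{G}^1=\overline{f}_1(y^1)^2$ and $\overline{G}^2=\overline{f}_2(y^1)^2$ are quadratic forms in $y$ with $x$-dependent coefficients, so $\overline{F}$ is Berwaldian. I expect no real obstacle: the computation is almost entirely bookkeeping, and the two points deserving care are (i) the equivalence between $\overline{f}_i$ having $u$-degree $\le2$ and $\overline{F}$ being Berwald, used in the reduction; and (ii) checking, via the displayed identities, that the specific $Q$ imposed by the Landsberg condition, together with the $Q_{tt}=0$ sub-case of Lemma~\ref{Special_Q}, really annihilates every power of $t$ above the second in $Q/Q_t$, $Q^2/Q_t$ and $1/Q_t$ — which is precisely what keeps the conformal deformation of the quadratic geodesic spray quadratic.
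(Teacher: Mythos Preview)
Your proposal is correct and follows essentially the same approach as the paper: substitute the explicit $Q$ and $Q_t$ obtained in Theorem~\ref{Theorem_A} into the conformal formulae of Lemma~\ref{f_1&2_bar}, observe that the resulting $\overline{f}_1,\overline{f}_2$ are polynomials of degree at most $2$ in $u$, and conclude that the transformed spray is quadratic and hence Berwald. Your write-up is in fact more explicit than the paper's own proof, which merely asserts the quadraticity after substitution; your displayed identities for $1/Q_t$, $Q/Q_t$, $Q^2/Q_t$ and your handling of the $Q_{tt}=0$ branch make the verification transparent.
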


\begin{proof}
  By the formulae \eqref{Q_t} and \eqref{Q}, we have

  $$ Q_t=\frac{c_2}{(2c_1-t)^2},\quad   Q=\frac{c_2}{2c_1-t}+c_3.$$
  Taking  the fact that the geodesic spray of the class \eqref{Eq:Main_Form}  is quadratic into account. Then,  substituting by $Q$ and $Q_t$ into the formulae of $\overline{f}_1$ and $\overline{f}_2$ which are given in Lemma \ref{f_1&2_bar},  we conclude that the geodesic spray of $\overline{F}$ is quadratic and hence the space is Berwaldian.
\end{proof}

As by-product, we get the following explicit formulae for two dimensional   Landsberg  metrics which are Berwaldain.
\begin{theorem}\label{Theorem_C}
 The Landsberg metric in the form \eqref{Eq:Main_Form}  is given by
\begin{equation} \label{Eq:Phi_1}
F=	\sqrt{c_3\rho^2(y^2)^2-(c-2)\rho y^1y^2-2c_1 (y^1)^2}  \ e^{ \frac{c}{\sqrt{c^2-4c_2}}\ \operatorname{arctanh}\left( \frac{2c_3 \rho y^2-(c-2)y^1}{y^1\sqrt{c^2-4c_2}}\right) }.
\end{equation}
 As a special case, let $\rho(x)=1$ so that    $\phi(t)=\phi(u)$.  Then by making use of Theorem \ref{Theorem_B}, we have
 $$f(x^1,x^2,u)= \sigma(x^1,x^2) \phi(u)$$
 and we get the class
 \begin{equation} \label{Eq:Phi_special}
 F=\sigma(x)\sqrt{c_3(y^2)^2-(c-2) y^1y^2-2c_1 (y^1)^2} \ e^{ \frac{c}{\sqrt{c^2-4c_2}}\ \operatorname{arctanh}\left( \frac{2c_3  y^2-(c-2)y^1}{y^1\sqrt{c^2-4c_2}}\right) }.
 \end{equation}
   In addition, the metrics that satisfy that $Q_{tt}=0$ are  given by
    \begin{equation}\label{Eq:Phi_2}
   	F=\sqrt{a\rho^2(y^2)^2+b\rho y^1y^2+ (y^1)^2} \ e^{ - \frac{b}{\sqrt{b^2-4a}}\ \text{arctanh}\left( \frac{2a \rho y^2+by^1}{y^1\sqrt{b^2-4a}}\right) }.
   \end{equation}
   Where $t =\rho(x^1,x^2) u$, $u =\frac{y^2}{y^1}$, $c:=2c_1c_3+c_2+1$ and $c_2>0, a,b,c_1,c_3$ are constants.
\end{theorem}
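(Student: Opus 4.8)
The plan is to recover the explicit metrics of Theorem~\ref{Theorem_C} by integrating the function $Q$ obtained in the proof of Theorem~\ref{Theorem_A}, using the representation formula \eqref{Q_f} for $\phi$. We already know that any Landsberg metric of the form \eqref{Eq:Main_Form} forces $Q = \dfrac{c_2}{2c_1-t}+c_3$ with $c_2>0$, so the whole content of Theorem~\ref{Theorem_C} is a matter of carrying out the quadrature $\phi = \exp\!\left(\int \frac{Q}{1+tQ}\,dt\right)$ and then converting back via $f=\phi(t)$, $t=\rho u$, $F=|y^1|\phi(t)$, with $y^2/y^1 = u$, $\rho y^2/y^1 = t$.

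First I would substitute $Q=\dfrac{c_2}{2c_1-t}+c_3$ into the integrand. Writing $1+tQ = \dfrac{(2c_1-t)+t(c_2+c_3(2c_1-t))}{2c_1-t} = \dfrac{-c_3 t^2 + (c-2)t + 2c_1}{2c_1-t}$ (using $c := 2c_1c_3 + c_2 + 1$, so that the coefficient of $t$ is $c_2 + 2c_1c_3 - 1 = c - 2$), one gets
\[
\frac{Q}{1+tQ} = \frac{c_2 + c_3(2c_1-t)}{-c_3 t^2 + (c-2)t + 2c_1}
= \frac{-c_3 t + (c-1)}{-c_3 t^2 + (c-2)t + 2c_1}.
\]
The denominator $P(t) := -c_3 t^2 + (c-2)t + 2c_1$ satisfies $P'(t) = -2c_3 t + (c-2)$, so the numerator splits as $\tfrac12 P'(t) + \tfrac{c}{2}$. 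Hence
\[
\int \frac{Q}{1+tQ}\,dt = \frac12 \ln|P(t)| + \frac{c}{2}\int \frac{dt}{-c_3 t^2 + (c-2)t + 2c_1}.
\]
The remaining integral is the standard arctanh/arctan form; completing the square, its discriminant is $(c-2)^2 + 8c_1 c_3 = c^2 - 4(c_2+1) + 4 = c^2 - 4c_2$, which yields
$\int \frac{dt}{P(t)} = \dfrac{1}{\sqrt{c^2-4c_2}}\operatorname{arctanh}\!\left(\dfrac{2c_3 t - (c-2)}{\sqrt{c^2-4c_2}}\right)$ (up to a sign I would pin down carefully). Exponentiating gives
$\phi(t) = \sqrt{P(t)}\; e^{\frac{c}{\sqrt{c^2-4c_2}}\operatorname{arctanh}\left(\frac{2c_3 t-(c-2)}{\sqrt{c^2-4c_2}}\right)}$, and then $F=|y^1|\phi(\rho u)$ with $P(\rho u) = \dfrac{c_3 \rho^2 (y^2)^2 - (c-2)\rho y^1 y^2 - 2c_1 (y^1)^2}{(y^1)^2}$ absorbs the $|y^1|$ to produce exactly \eqref{Eq:Phi_1}. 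The special case $\rho \equiv 1$ combined with Theorem~\ref{Theorem_B} (conformal invariance of the Berwald property within this class) gives \eqref{Eq:Phi_special} immediately. For \eqref{Eq:Phi_2} I would repeat the computation starting from $Q_{tt}=0$, i.e. $Q = at'+b$ for suitable constants (here the roles of the constants are relabelled so that $P(t)= a\rho^2 u^2 + b\rho u + 1$ after normalization), which is a strictly easier quadrature of the same shape.

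The only delicate points are bookkeeping ones: matching the signs and branches of $\operatorname{arctanh}$ (and the implicit assumption $c^2 - 4c_2 > 0$, so that one is in the hyperbolic rather than trigonometric regime — otherwise one would get an $\arctan$), keeping track of the $\varepsilon = \operatorname{sgn}(y^1)$ factors when homogenizing $\phi(t)$ back to a function of $(y^1,y^2)$, and verifying that the arbitrary multiplicative constant from the indefinite integral is harmless since $F$ is only determined up to an overall positive scalar which can be absorbed (or fixed by a choice of $\rho$). The identification $c = 2c_1c_3 + c_2 + 1$ and the discriminant identity $c^2 - 4c_2 = (c-2)^2 + 8c_1c_3$ are the two algebraic facts that make the final formula close up cleanly; once those are in hand the rest is substitution.
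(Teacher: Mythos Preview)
Your proposal is correct and follows essentially the same route as the paper: substitute the $Q$ obtained in Theorem~\ref{Theorem_A} into \eqref{Q_f}, split $\dfrac{Q}{1+tQ}$ as a logarithmic derivative plus a constant over the quadratic, and integrate to an $\operatorname{arctanh}$, with the $Q_{tt}=0$ case handled identically from $Q=at+b$. The paper's proof differs only cosmetically (it writes the decomposition with the completed-square denominator explicitly and uses $-P(t)$ where you use $P(t)$); your ``bookkeeping'' caveats about signs and the overall multiplicative constant are exactly the loose ends the paper also leaves implicit.
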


\begin{proof}
To calculate the first class, by \eqref{Q},  one can show that
 $$\frac{Q}{1+tQ}=\frac{-c_3t+2c_1c_3+c_2}{-c_3t^2+(2c_1c_3+c_2-1)t+2c_1}$$
 which can be rewritten in the following useful form
  $$\frac{Q}{1+tQ}=\frac{1}{2}\frac{2c_3t-c+2}{c_3t^2-(c-2)t-2c_1}+\frac{2c c_3}{(c^2-4c_2)-(2c_3t-c+2)^2}.$$
  Hence, we have
    $$\int \frac{Q}{1+tQ} dt=\frac{1}{2}\ln{(c_3t^2-(c-2)t-2c_1)}+\frac{ c  }{\sqrt{c^2-4c_2}} \operatorname{arctanh}\left( \frac{2c_3 t-(c-2) }{ \sqrt{c^2-4c_2}}\right) .$$

   Now, using \eqref{Q_f}, the function $\phi(t)$ can be calculated and so we get the Finsler function.

To find the third class of  the Finsler functions, since $Q_{tt}=0$, then $Q=at+b$. Now, we have
$$\frac{Q}{1+tQ}=\frac{at+b}{at^2+bt+1}=\frac{2at+b}{2(at^2+bt+1)}-\frac{2ab}{b^2-4a-(2at+b)^2}.$$
Hence,  by the help of \eqref{Finsler_Function} and  \eqref{Q_f}, the required formula can be obtained.
\end{proof}

As a direct consequence, we can make use of  the surprising result due to Z. Szabo \cite{Szabo_Berwald} and, for more details, we refer to \cite{Szilasi-book}.
\begin{corollary}
All  two-dimensional   Landsberg metrics on the form  \eqref{Eq:Main_Form} and their conformal transformations  are flat (Locally Minkowski)or Riemannian.
\end{corollary}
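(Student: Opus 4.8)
The plan is to deduce this immediately from the Berwaldness results already established, combined with the known rigidity of Berwald surfaces. First I would invoke Theorem \ref{Theorem_A}, which guarantees that every two-dimensional Landsberg metric of the form \eqref{Eq:Main_Form} is Berwaldian, and Theorem \ref{Theorem_B}, which extends this conclusion to all conformal transformations $\overline{F}=\sigma(x^1,x^2)F$ by a positive smooth function $\sigma$. Consequently, the entire class of metrics under consideration consists of Berwald surfaces.

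Next I would appeal to the structure theorem for Berwald metrics due to Z. Szab\'o \cite{Szabo_Berwald} (see also \cite{Szilasi-book}): a Berwald space is, in an appropriate sense, built from a Riemannian piece, and in particular in dimension two a Berwald surface is either Riemannian or locally Minkowski (equivalently flat, with vanishing flag curvature). Applying this dichotomy to each member of the class produced by Theorems \ref{Theorem_A} and \ref{Theorem_B} gives the stated conclusion. Before this step I would record that the metrics in question are genuine regular Finsler surfaces, so that Szab\'o's theorem is applicable: the degenerate possibilities have already been ruled out in Lemma \ref{Special_Q}(b) (the case $Q=\theta(x)$, which forces $F$ to be linear), and multiplication by the positive factor $\sigma$ preserves regularity.

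I do not expect a genuine obstacle here, since the corollary is a direct consequence of Theorems \ref{Theorem_A}--\ref{Theorem_B} together with an already-known deep result. The only point requiring a line of care is checking that the explicit solutions exhibited in Theorem \ref{Theorem_C} — which involve $Q_t=c_2/(2c_1-t)^2$ with $c_2>0$ and the $\operatorname{arctanh}$ factor — do define regular, non-degenerate Finsler functions on the relevant domain, so that the invocation of \cite{Szabo_Berwald} is legitimate on the whole class and not merely generically.
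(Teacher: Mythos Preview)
Your proposal is correct and matches the paper's own argument essentially verbatim: the paper presents this corollary as a direct consequence of Theorems \ref{Theorem_A} and \ref{Theorem_B} together with Szab\'o's structure theorem \cite{Szabo_Berwald}, with no further proof given. Your added remarks on regularity (via Lemma \ref{Special_Q}(b) and the positivity of $\sigma$) are a reasonable elaboration that the paper leaves implicit.
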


In case of higher dimensions $n\geq 3$, the conformal transformation of a Minkowski metrics can yield a non-Berwaldian Landsberg metrics (cf. \cite{Elgendi-LBp}). Assume that a manifold $(M,F)$ is Minkowski if the Finsler function $F$ depends on the directional argument $y$ only. In contrast of the higher dimensions, we have the following result.

\begin{theorem}\label{Theorem_D}
If the conformal transformation  of any two dimensional  Minkowski metric is Landsberg, then it is  Berwaldian.
\end{theorem}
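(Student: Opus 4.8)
The plan is to reduce the statement to the same ordinary differential equation already solved in the proof of Theorem~\ref{Theorem_A}. By Lemma~\ref{Lemma_G1_G2} a two--dimensional Minkowski metric can be written $F=\abs{y^1}f(\varepsilon u)$ with $f$ independent of $x$; it is therefore the case $\rho\equiv 1$, $\phi=f$ of \eqref{Eq:Main_Form}, and since $\partial_1f=\partial_2f=0$, formulas \eqref{G_f1}--\eqref{G_f2} give $f_1=f_2=0$, i.e.\ the geodesic spray of $F$ vanishes. Hence the conformal transformation is $\overline{F}=\abs{y^1}\sigma(x^1,x^2)f(\varepsilon u)$, i.e.\ $\overline{f}=\sigma f$, and applying \eqref{G_f1}--\eqref{G_f2} to $\overline{f}=\sigma f$ (equivalently, specializing Lemma~\ref{f_1&2_bar}, whose derivation does not use the particular $Q$, to $\rho\equiv 1$ and $f_1=f_2=0$) gives
$$\overline{f}_1=A+uB+B\,\tfrac{Q}{Q_t}-A\,\tfrac{Q^2}{Q_t},\qquad \overline{f}_2=uA+u^2B+A\,\tfrac{Q}{Q_t}-B\,\tfrac{1}{Q_t},$$
where $A:=\dfrac{\partial_1\sigma}{2\sigma}$ and $B:=\dfrac{\partial_2\sigma}{2\sigma}$ depend only on $x$, while $Q=\dfrac{f'}{f-uf'}$ and $Q_t=\dfrac{ff''}{(f-uf')^{2}}$ depend only on $u$ (with $\rho\equiv 1$ one has $t=u$, so $Q'=Q_t$, $Q''=Q_{tt}$, and so on). Discarding in each of $\overline{f}_1,\overline{f}_2$ the part that is a polynomial of degree $\le 2$ in $u$,
$$\overline{f}_1'''=B\big(\tfrac{Q}{Q_t}\big)'''-A\big(\tfrac{Q^2}{Q_t}\big)''',\qquad \overline{f}_2'''=A\big(\tfrac{Q}{Q_t}\big)'''-B\big(\tfrac{1}{Q_t}\big)'''.$$

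Next I would feed this into Proposition~\ref{Landsberg_PDE}. Since $\overline{F}$ has the form \eqref{Finsler_Function}, with $\overline{\ell}_1=\varepsilon\sigma(f-uf')$ and $\overline{\ell}_2=\varepsilon\sigma f'$, the Landsberg condition $\overline{f}_1'''\,\overline{\ell}_1+\overline{f}_2'''\,\overline{\ell}_2=0$ simplifies (using $f'=Q(f-uf')$) to $\overline{f}_1'''+Q\,\overline{f}_2'''=0$, that is
$$A\Big(Q\big(\tfrac{Q}{Q_t}\big)'''-\big(\tfrac{Q^2}{Q_t}\big)'''\Big)+B\Big(\big(\tfrac{Q}{Q_t}\big)'''-Q\big(\tfrac{1}{Q_t}\big)'''\Big)=0 .$$
The core of the argument --- and the step I expect to be the main obstacle --- is to simplify the two coefficient functions. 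With $P:=\tfrac{Q}{Q_t}$, $R:=\tfrac{Q^2}{Q_t}$, $S:=\tfrac{1}{Q_t}$ one has the algebraic identities $R=QP$ and $P=QS$; expanding $R'''=Q'''P+3Q''P'+3Q'P''+QP'''$ (and likewise $P'''$ in terms of $S$) and substituting $P=Q/Q_t$, $S=1/Q_t$, a short computation collapses everything to
$$Q\big(\tfrac{Q}{Q_t}\big)'''-\big(\tfrac{Q^2}{Q_t}\big)'''=-\frac{Q}{Q_t^{2}}\,\Delta,\qquad \big(\tfrac{Q}{Q_t}\big)'''-Q\big(\tfrac{1}{Q_t}\big)'''=\frac{1}{Q_t^{2}}\,\Delta,\qquad \Delta:=3Q_{tt}^{2}-2Q_tQ_{ttt}.$$
Hence the Landsberg condition reduces to $\dfrac{\Delta}{Q_t^{2}}\,(B-AQ)=0$.

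To finish: if $\sigma$ is constant then $\overline{F}=cF$ is again Minkowski, hence Berwaldian. If $\sigma$ is non-constant, then $(A,B)\not\equiv(0,0)$, and $B-AQ=\tfrac{1}{2\sigma}(\partial_2\sigma-Q\,\partial_1\sigma)$ cannot vanish identically on $M\times\Real$: at a point $x_0$ where $(A(x_0),B(x_0))\neq 0$ we must have $A(x_0)\neq 0$ (otherwise $B(x_0)=0$ as well), so $B-AQ\equiv 0$ would force $Q(u)$ to be the constant $B(x_0)/A(x_0)$, hence $Q_t\equiv 0$, contradicting regularity (compare Lemma~\ref{Special_Q}(b)). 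Since also $Q_t\neq 0$, we conclude $\Delta=3Q_{tt}^{2}-2Q_tQ_{ttt}=0$, which is exactly the ODE obtained in the proof of Theorem~\ref{Theorem_A}. If $Q_{tt}=0$ then $Q$ is affine in $u$ (Lemma~\ref{Special_Q}(a)); otherwise \eqref{Q_t}--\eqref{Q} give $Q_t=\dfrac{c_2}{(2c_1-u)^{2}}$ and $Q=\dfrac{c_2}{2c_1-u}+c_3$. In either case one checks directly that $P=\tfrac{Q}{Q_t}$, $R=\tfrac{Q^2}{Q_t}$, $S=\tfrac{1}{Q_t}$ are polynomials of degree $\le 2$ in $u$, so $P'''=R'''=S'''=0$ and therefore $\overline{f}_1'''=\overline{f}_2'''=0$; by \eqref{Berwald_tensors} the whole Berwald curvature of $\overline{F}$ vanishes, i.e.\ $\overline{F}$ is Berwaldian. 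The only genuinely new input beyond Theorems~\ref{Theorem_A}--\ref{Theorem_B} is this collapse of the Landsberg PDE to $\Delta\,(B-AQ)=0$, together with the check that the $\Delta=0$ solutions render the \emph{transformed} spray (and not merely the original Minkowski spray) quadratic.
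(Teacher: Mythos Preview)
Your proof is correct and follows the same overall route as the paper---reduce the Landsberg condition on $\overline{F}$ to the ODE $3Q_{tt}^{2}-2Q_tQ_{ttt}=0$ already solved in Theorem~\ref{Theorem_A}, and then observe that the resulting $Q$ makes the transformed spray quadratic. The difference is one of detail: the paper's proof simply asserts that ``since $\overline{F}$ is Landsbergian then $f(u)$ is given by~\eqref{Eq:Phi_special}'', tacitly assuming that the analysis of Theorem~\ref{Theorem_A} carries over to the conformal situation, whereas you actually carry this out---you specialize Lemma~\ref{f_1&2_bar} to $\rho\equiv 1$, $f_1=f_2=0$, feed the result into the Landsberg PDE, and verify the key collapse
\[
\overline{f}_1'''+Q\,\overline{f}_2'''=\frac{\Delta}{Q_t^{2}}(B-AQ),\qquad \Delta=3Q_{tt}^{2}-2Q_tQ_{ttt},
\]
together with the separation-of-variables argument that $B-AQ\not\equiv 0$ when $\sigma$ is non-constant. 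This is precisely the content the paper's one-line invocation of~\eqref{Eq:Phi_special} is relying on, so your argument is a fleshed-out version of the same proof rather than a genuinely different one; what it buys you is a self-contained justification that does not lean on the reader re-running Theorem~\ref{Theorem_A} in the conformal setting.
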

\begin{proof}
Let $(M,F)$ be a Minkowski space, then one can write $$F=\abs{y^1}f(u).$$
In this case $F$ can be considered a special case of the form \eqref{Eq:Main_Form} by setting $\rho(x)=1$.
Now, the conformal transformation of $F$ takes the form
$$\overline{F}=\sigma(x) F=\abs{y^1}\sigma(x)f(u).$$
Since  $\overline{F}$ is Landsbergian then $f(u)$ is given by  \eqref{Eq:Phi_special}, that is $f(u)=\phi(u)$. Consequently, $\overline{F}$ is Berwaldain.
\end{proof}

\section{Concluding remarks}

We end this work by the following    remarks.

$\bullet$ If we consider the Landsberg surface in the form $$F=\abs{y^2} \varphi(\rho(x) v), \quad v:=y^1/y^2,$$
then we get  a similar  formula of $\varphi(\rho(x)v)$ as the formula giving by \eqref{Eq:Phi_1}. In fact this corresponds  a change of coordinates by switching $x^1$ and $x^2$ and  this implies switching $y^1$ and $y^2$.  Moreover, if we need to fix a coordinate system and work on  $F=\abs{y^2} \varphi(\rho(x) v)$, then $u=\frac{1}{v}$. To get the function $\varphi$,   we have to substitute by $u=\frac{1}{v}$ in \eqref{Eq:Phi_1}, that is $\varphi(\rho(x) v)=\phi(\frac{\rho(x)}{v})$.  That is, we obtain the same formula of the Finsler function $F$.

$\bullet$ If we consider the Landsberg surface in the form $$F=\abs{y^2} \varphi(\rho(x) v), \quad v:=y^1/y^2,$$
then we get  a similar  formula of $\varphi(\rho(x)v)$ as the formula giving by \eqref{Eq:Phi_1}.

\medskip

$\bullet$ If there is a singularity in the surfaces given by \eqref{Eq:Phi_1} or \eqref{Eq:Phi_2} at certain direction, then this singularity does not come from the  assumption that $F=\abs{y^1}  \phi(\rho(x) u)$ rather it comes from being $F$  a solution  of the Landsberg's PDE. Moreover, the choice of the constants $c_1$, $c_2$, $c_3$ play an essential rule to determine whether the resultant metric is regular. Many, but not all, Berwaldian surfaces among them the Riemannian ones can be obtained by specific choice of these constants. For example the choice $c_1=c_2=1$ and  $c_3=-1$ leads to $c=0$ and hence we get  the Riemannian metrics
$$F=  \sqrt{ a(x)(y^1)^2+b(x)y^1y^2+c(x)(y^2)^2},$$
for some functions $a(x)$, $b(x)$, $c(x)$.

\providecommand{\bysame}{\leavevmode\hbox
to3em{\hrulefill}\thinspace}
\providecommand{\MR}{\relax\ifhmode\unskip\space\fi MR }
\providecommand{\MRhref}[2]{%
  \href{http://www.ams.org/mathscinet-getitem?mr=#1}{#2}
} \providecommand{\href}[2]{#2}

\end{document}